\newtheorem{theorem}{Theorem}[section]
\newtheorem{proposition}[theorem]{Proposition}
\newtheorem{corollary}[theorem]{Corollary}
\newtheorem{definition}[theorem]{Definition} %[section]
\newtheorem{lemma}[theorem]{Lemma} %{Lemma}
\theoremstyle{remark}
\newtheorem{remark}[theorem]{Remark}
\numberwithin{equation}{section}
\newcommand{\R}{{\mathbb R}}
\newcommand{\I}{{\mathcal I}}
\newcommand{\bx}{{\bar x}}
\newcommand{\by}{{\bar y}}
\newcommand{\epsi}{\varepsilon}
\newcommand{\eps}{\varepsilon}
\newcommand{\ra}{\rightarrow}
\newcommand{\alp}{\alpha}
\newcommand{\SN}{\mathbb{S}^N}
\newcommand{\del}{\partial}
\newcommand{\ol}{\overline}
\DeclareMathOperator{\dist}{dist}
\def\domeg{\partial \Omega}
\def\omegb{\overline \Omega}
\def\pse{\psi_{\eps,\eta}}%^{K_1^\eta,K_2^\eta}}
\def\norm#1{|#1|}
\def\barre{|}
\def\lamb0{{\lambda_0}}
\begin{document}
\title[nonlocal Neumann  boundary value problems]
{On Neumann and oblique derivatives boundary conditions for
nonlocal elliptic equations} 
 
\author[G. Barles, C. A. Georgelin \&  E. R. Jakobsen]
{Guy Barles, Christine A. Georgelin \& Espen R. Jakobsen}

\address{Guy Barles \newline
Laboratoire de Math\'ematiques et Physique
Th\'eorique (UMR CNRS 7350)\newline
%Facult\'e des Sciences et Techniques, \newline
F\'ed\'eration Denis Poisson (FR CNRS 2964) \newline
 Universit\'e de
Tours, Parc de Grandmont \newline
 37200 Tours \newline 
 FRANCE 
 }
\email{barles@univ-tours.fr }
\urladdr{http://www.lmpt.univ-tours/$\sim$barles}

\address{Christine Georgelin \newline
Laboratoire de Math\'ematiques et Physique
Th\'eorique (UMR CNRS 7350)\newline
%Facult\'e des Sciences et Techniques, \newline
F\'ed\'eration Denis Poisson (FR CNRS 2964) \newline
 Universit\'e de
Tours, Parc de Grandmont \newline
 37200 Tours \newline 
 FRANCE  
 }
\email{christine.georgelin@univ-tours.fr }
\urladdr{http://www.lmpt.univ-tours/$\sim$georgeli}

\address{Espen R. Jakobsen \newline
    Department of Mathematical Sciences \newline 
    Norwegian University of Science and Technology \newline
    7491 Trondheim, Norway }
\email{erj@math.ntnu.no}
\urladdr{http://www.math.ntnu.no/$\sim$erj}

\date{\today}
\thanks {This work was supported by the Research Council
  of Norway   (NFR) through the project ``Integro-PDEs: Numerical methods,
  Analysis, and Applications to Finance''.}

\keywords{Nonlocal  Elliptic equation, Neumann-type boundary conditions, general nonlocal operators, reflection, viscosity solutions, L\'evy process  }

 \subjclass[2000]{%
 35R09 (45K05), % two classifications for one subject ?  !!!!!!!!!
 35B51, % 
35D40 % 
 }

\begin{abstract}
 Inspired by the penalization of the domain approach of Lions \&
 Sznitman, we give a sense to Neumann and oblique derivatives boundary
 value problems for nonlocal, possibly degenerate elliptic equations. Two different cases are considered: (i) homogeneous Neumann boundary conditions in convex,
  possibly non-smooth and unbounded domains, and (ii) general oblique
  derivatives boundary conditions in smooth, bounded, and possibly
  non-convex domains. In each case we give apropriate definitions of
  viscosity solutions and prove uniqueness of solutions of the
  corresponding 
  boundary value problems. We prove that these
    boundary value problems arise in the penalization of the domain
    limit from whole space problems and obtain as a corollary the
    existence of solutions of these problems.
\end{abstract}

\maketitle

\section{Introduction}
\label{sec:intro}

Inspired by the penalization of the domain approach of
Lions \& Sznitman \cite{pls} (see also \cite{LMS,MR}), we give a sense to
Neumann and oblique derivatives boundary value problems for nonlocal
degenerate elliptic partial integro-differential equations (PIDEs in short). 
  Because of the nonlocal nature of our PIDEs  posed in a domain
  $\Omega$, the boundary conditions have then to be imposed not only at
  the boundary $\del\Omega$, but possibly in all of the complement
  $\Omega^c$. At least boundary conditions must be imposed in the
  union of  the supports of the jump measures (see below).

To be more specific, we consider PIDEs of the form
\begin{equation}
   \label{E}
F(x,u,Du,D^2 u, \I [u](x)) =\ 0
  \quad \text{in}  \quad\Omega,
\end{equation}
with {\em extended} Neumann/oblique derivatives boundary conditions
\begin{equation}
Du(x)\cdot \gamma(x)=\ g(x) \quad\text{in}\quad \Omega^c.\label{BC}
\end{equation}
Here $\Omega$ is a domain in $\R^N$ and $F$ is a
real-valued, continuous function defined on $\R^N \times \R \times
\R^N \times \SN \times \R$, where $\SN$ is the space of  $N\times N$ symmetric
matrices. We assume that $F$ is degenerate elliptic which, in this
nonlocal setting, means that for any $x \in \R^N$, $u \in \R$,  
$p \in \R^N$, $M, N \in \SN$ and $l_1, l_2 \in \R$, 
\begin{equation}\label{deg-ell}
F(x,u,p,M,l_1) \leq F(x,u,p,N,l_2)\quad\hbox{when}\quad M\geq N,\ l_1
\geq l_2,
\end{equation}
where $M\geq N$ has to be understood in the sense of the usual partial ordering on symmetric matrices.
 Our assumptions cover the cases of general linear (see example below)
 and nonlinear equations and, in particular, Bellman-Isaacs equations of control and game theory.

The equation is nonlocal because of its dependence on the nonlocal
operator $\I [u]$ which we assume to be of L\'evy-Ito type. For any
smooth bounded function $\phi$ and for any $x\in \R^N$, 
\begin{equation}
\label{defLevyIto}
\I[\phi](x)=\int_{\R^N}\left [\phi(x+j(x,z))-\phi(x)-D\phi(x)\cdot j(x,z) 1_B(z)\right ]\, d\mu(z) ,
\end{equation}
where $B\subset\R^N$ is the unit ball, $1_B$ the indicator function of
$B$, $\mu$ -- the L\'evy measure -- is a positive Radon measure on
$\R^M\setminus\{0\}$ and $j : \R^N \times \R^M \to \R^N$ is a function which is $\mu$-measurable in $z$ and continuous in $x$ for $\mu$-a.e. $z$, and there exists a constant $c(j)>0$ such that
\begin{align}
\label{Levy}
 \int_{\R^N} |z|^2\wedge 1\ d\mu(z)<\infty\quad\text{and}\quad
 |j(x,z)|\leq c(j)|z|\quad\text{for any}\; |z|<1,\ x\in \R^N.
\end{align}
A Taylor expansion shows that $\I [\phi]$ is well-defined under
\eqref{Levy}. {\em We will assume that \eqref{Levy}
  holds throughout this paper.}  The operator $\I $ is the generator
of a stochastic jump process which solves a stochastic differential
equation involving a general jump term/Poisson random measure,
cf. \cite{GS:Book,pha}.
    Included are generators of all  pure jump Levy processes
    \cite{A:Book} as well most Levy models arising in Finance \cite{CT:Book}.

A typical example of Equation~(\ref{E}) is the linear equation
\begin{equation}\label{example}
 a(-\Delta)^{\frac\alp2} u - {\rm Tr}(A(x)D^2 u) -b(x)\cdot Du + \lambda (x) u = f(x)  \quad \text{in}  \quad\Omega,
\end{equation}
where $a\in \R$, $A, b, \lambda, f$  are continuous functions on $\omegb$,
taking values respectively in $\SN$, $\R^N$, $(0,+\infty)$ and
$\R$, and, for $\alp\in(0,2)$, $\I=-(-\Delta)^{\frac\alp2} $ is
the Fractional Laplacian in $\R^N$, the generator of the symmetric
$\alp$-stable processes. It is defined e.g. by \eqref{defLevyIto} with
$j(x,z)\equiv z$ and $\displaystyle d\mu(z)= c_\alp
\frac{dz}{|z|^{N+\alpha}}$ for some constant $c_\alp>0$.
To satisfy the (degenerate) ellipticity condition (\ref{deg-ell}), we
must impose that both $a\geq0$ and $A \geq 0$ in $\omegb$.

It is the definition of $\I$  (cf. (\ref{defLevyIto})) that
requires $u$ to be defined in all of $\R^N$, and hence that the
boundary condition must be posed in all of $\Omega^c$.

% We point out 
% %on this example 
% that the nonlocal nature of the operator $\I$
% (e.g. $-(-\Delta)^{\frac\alp2}$)) imposes that the solution is defined
% in the whole space $\R^N$
%  (cf. the definition of $\Delta^{\frac\alp2}$) and that, contrarily to the approach of \cite{bcgj}, we do not try to obtain a formulation with a solution defined only on $\omegb$ with a modified operator. The formulation in the whole space is simpler in our context. We will come back on this point later on and on the approach of \cite{bcgj}.
For the boundary (or exterior) condition \eqref{BC}, we assume that 

\smallskip
\begin{itemize}
\item[(BC1)] The functions $\gamma : \R^N \to \R^N$ and $g: \R^N \to
  \R$ are bounded Lipschitz continuous functions, and there
  exists $\nu >0$ such that $\gamma (x) \cdot n (x) \geq \nu$ for any
  $x \in \domeg$.
\item[]
\item [(BC2)] For any $x \in \omegb^c$, $\tau_x := \inf_{t>0}\{ X_x(t) \in \omegb\}
  <+\infty$, where $X_x(\cdot)$ solves
\begin{equation}\label{ode}
X_x(0) = x \qquad\text{and} \qquad \dot X_x(t)=- \gamma (X_x(t))\quad\text{for}\quad t>0.
\end{equation}
\end{itemize}
%\medskip

Assumption (BC1) is sufficient for \eqref{BC} to really play the role
of a boundary condition in the case of local equations.  Assumption
(BC2) states that integral curves of the vector field $-\gamma$
starting from any point $x\in\Omega^c$, will reach the boundary
$\del\Omega$ in finite time. This is a natural condition for a Neumann
type boundary condition, and it is very closely related to the idea of the
``penalization of the domain'' method of  Lions \& Sznitman
\cite{pls} (see also \cite{LMS,MR}). This method is based on the
observation that in the limit $\kappa\ra0$, the vector field $-\frac 1 \kappa
\gamma$ instantaneously returns the underlying stochastic process to
$\omegb$  after an outside jump, and this is where \eqref{ode} plays a role. We refer to
Section~\ref{asymp} for more details in this direction.

As in Lions \& Snizman \cite{pls}, we use the notion of viscosity
solutions. For nonlocal equations posed in full space, we refer to
\cite{BI} (see also \cite{BBP,JK,pha}) and references therein for an
account of this theory.  A nonlocal Dirichlet problem was considered
in \cite{BCI}, where boundary conditions are given in all of
$\Omega^c$ in an analogous way as in this paper. Here we consider two
different cases: (i) a very general class of equations with
homogeneous Neumann boundary conditions posed in convex, possibly
non-smooth and unbounded domains, and (ii) a less general class of
equations with general oblique derivatives boundary conditions in
smooth, bounded, and possibly non-convex domains.  In each case we
give appropriate definitions of viscosity solutions and prove
uniqueness theorems for the corresponding boundary value
problems.  
 Here we want to point out that the extended oblique
derivative condition (\ref{BC}) influences the behavior of the solutions at infinity and therefore
interferes in the conditions which are needed to have a well-defined
nonlocal operator; we discuss this point at the end of
Section~\ref{prelim}. We also show that our formulation follows 
  from a sequence of problems posed in the whole space obatined from
 the
  penalization of the domain method in a similar way as 
in \cite{pls}. As a consequence we also get some existence results for
our problems. 

In a related paper \cite{bcgj}, the authors
along with E. Chasseigne investigate four different ways of
understanding homogeneous Neumann boundary conditions for Levy-type
nonlocal equations posed in the half space $\Omega= \R^{N-1}\times
\R^+$. Here, the simple geometry of the domain allows
formulations where the nonlocal operators and equations are restricted
to $\overline \Omega$. One of the cases, the one involving normal projection of
outside jumps, is linked to the present work. 
%  However some of
% these ``reflection problems'' can hardly be extended to general curved
% domains.  
The restriction to $\overline \Omega$ formulation of \cite{bcgj} is technically
more difficult to work with than the very natural full space
formulation we use in this paper. This also explains why we now obtain
much more general results for the normal/oblique projection type of
models. Where we in \cite{bcgj}  
considered linear, non-degenerate problems with a restricted class of
Levy operators on a simple domain, we can now treat very general
equations, Levy operators, and domains, and inhomogeneous and even oblique boundary conditions.

In some cases covered in this paper, a probabilistic
 description of the reflection problems based on stochastic
 differential equations can be found in \cite{MG96}. 
Elsewhere in the literature similar problems have been
investigated for L\'evy operators where the measure
$\mu_x$ forces the underlaying process to stay in the domain either by
a ``smooth'' restriction of its support or by ``killing'' all jumps 
leaving $\Omega$. In these cases a Neumann boundary condition can be
imposed only at the boundary $\del\Omega$, just as for local
problems. The first type of problems is considered in \cite{MR}, see
also the book \cite{GM}, and the killing approach is linked to the
$\alpha$-censored process \cite{BBC} and the regional
fractional Laplacian \cite{guan,guanmabd,guanmasym}. 

When the underlying process is a symmetric $\alp$-stable
processes (a subordinated Brownian motion), the above mentioned
approaches follows after a  ``reflection'' on the boundary: The 
processes can be constructed from a Brownian motion by
first subordinating it and then reflecting it. Another possible way
to construct a  ``reflected'' process it to first reflect the Brownian
motion and then  subordinate the reflected process. This approach is related to
 Dirichlet-Neumann operator, and it have been described
 e.g. by Hsu \cite{Hsu} using probabilistic methods and by Caffarelli
 and Silvestre \cite{CC} by analytic PIDE methods. Especially the ideas of
 \cite{CC} have been used by many authors since.

In all these approaches,  as well as in \cite{bcgj}, the
  non-local operator is no longer the orginal $\R^N$-operator. In fact
  the operator and hence also the equation will depend on the domain,
  and different domains $\Omega$ yield different operators and
  different equations inside $\Omega$. 

Our paper is organized as follows: Section \ref{prelim} is devoted to
a key technical lemma which allows us to control the solutions
outside the domain $\Omega$.
 We also discuss the connections between the extended oblique derivative condition (\ref{BC}), the behavior of the solutions at infinity, and the conditions which are needed to have a well-defined nonlocal operator. In Section 
\ref{sec:results}, we will focus on convex possibly non-smooth and
unbounded domains but restrict ourselves to homogeneous boundary
conditions. We define the concept of viscosity solution and proof a
comparison theorem. The key argument here is to obtain by convexity a
contraction property that force maximum points of the test function to
be in $\bar \Omega$.  After this, the proof can be concluded in the
standard (full space) way. In Section~\ref{gen-obl}, we prove a
comparison theorem in the case of general
oblique derivative conditions and smooth bounded possibly nonconvex
domains. The proof uses the complicated test function constructed by
G. Barles in \cite{ba2}, along with the technical lemma  of
Section~\ref{prelim}.  Finally, Section~\ref{asymp} is devoted to the analysis
of an asymptotic result -- the penalization of the domain method
introduced by Lions and Sznitman. We prove that the above boundary
value problems arise in the penalization of the domain limit of whole
space problems and obtain as a corollary existence results for our
Neumann problems.

\section{Preliminary Results}\label{prelim}

In this section we state and prove two lemmas which play key roles
in the proofs in the next sections. We recall from (BC2)
  that $\tau_y := \inf_{t>0}\{ X_y(t) \in \omegb\}$ for $y \in  \omegb^c$.

\begin{lemma}\label{backtoboundary} Assume (BC1) and (BC2).
\medskip

\noindent (a) If $u$ is a locally bounded, usc function satisfying
$Du(x)\cdot \gamma (x) 
\le g(x) $ in $\omegb^c$ in the viscosity sense, then, for any $y \in \omegb^c$ and for any $t \leq \tau_y$, we have
\begin{equation}
\label{maxatbdr}
u(y) \le \int_0^t g(X_y(s))ds + u(X_y(t)) .
\end{equation}
\smallskip
(b) If $v$ is a locally bounded, lsc function satisfying $Dv(x)\cdot \gamma (x)
\ge g(x) $ in $\omegb^c$ in the viscosity sense, then, for any $y \in \omegb^c$ and for any $t \leq \tau_y$, we have
\begin{equation}
\label{minatbdr}
v(y) \ge \int_0^t g(X_y(s))ds + v(X_y(t)) .
\end{equation}
\end{lemma}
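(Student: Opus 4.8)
The plan is to prove part (a) and note that part (b) follows by the symmetric argument (or by replacing $v$ with $-v$ and $g$ with $-g$). The key observation is that the function $t \mapsto u(X_y(t)) + \int_0^t g(X_y(s))\,ds$ should be nondecreasing on $[0,\tau_y]$; evaluating this at $t=0$ and at $t$ then gives \eqref{maxatbdr}. Since $u$ is merely usc and not differentiable, I would establish monotonicity in the viscosity sense rather than by direct differentiation.

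First I would reduce the statement to a claim about a single auxiliary function. Fix $y \in \omegb^c$ and, for $t \in [0,\tau_y]$, set $w(t) := u(X_y(t)) + \int_0^t g(X_y(s))\,ds$. Note that for $t < \tau_y$ the trajectory $X_y(t)$ stays in $\omegb^c$ by definition of $\tau_y$, and $X_y$ is $C^1$ there since $\gamma$ is Lipschitz (so \eqref{ode} has a unique solution). The goal is to show $w$ is nondecreasing; by continuity of $w$ up to $t = \tau_y$ it then suffices to argue on $[0,\tau_y)$. Suppose, for contradiction, that $w(t_1) > w(t_2)$ for some $0 \le t_1 < t_2 < \tau_y$. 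One can then find a smooth function $\psi(t)$ touching $w$ from above at some interior maximum point $t_0 \in (t_1,t_2)$ of $w(t) - \ell t$ for a suitable small slope $\ell > 0$, so that $\psi'(t_0) < 0$ while $\psi - (\text{affine})$ has a max at $t_0$. The standard device: pick $\ell>0$ small enough that $w(t_1) - \ell t_1 > w(t_2) - \ell t_2$, let $t_0$ be a maximum of $w(t) - \ell t$ over $[t_1,t_2]$ — which must lie in $[t_1,t_2)$ — and regularize.

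Next I would transfer this to a test for $u$ at the point $x_0 := X_y(t_0) \in \omegb^c$. The map $t \mapsto u(X_y(t))$ equals $w(t) - \int_0^t g(X_y(s))\,ds$, and the integral term is $C^1$ in $t$ with derivative $g(X_y(t))$. So at $t_0$ the function $t \mapsto u(X_y(t)) - [\text{smooth, with derivative } \le g(X_y(t_0)) - \ell' \text{ at } t_0]$ has a local max — more precisely, $u(X_y(t))$ is touched from above at $t_0$ by a $C^1$ function $\varphi(t)$ with $\varphi'(t_0) < g(X_y(t_0))$ (strictly, with a gap coming from $\ell$). Now I extend this to a full test function $\phi$ on $\R^N$: using that $\dot X_y(t_0) = -\gamma(x_0) \neq 0$ (note $\gamma \cdot n \ge \nu$ on $\domeg$ forces $\gamma$ nonzero near $\domeg$; away from $\domeg$ one still has $\gamma \neq 0$ along the trajectory since otherwise the trajectory is stationary and never reaches $\omegb$, contradicting (BC2)), I can choose $\phi \in C^1(\R^N)$ with $\phi(X_y(t)) \ge \varphi(t)$ near $t_0$, equality at $t_0$, and $D\phi(x_0)\cdot(-\gamma(x_0)) = \varphi'(t_0)$; e.g. take $\phi$ affine in the $\gamma(x_0)$-direction with the prescribed directional derivative and touching $u$ from above at $x_0$ (adding a suitable quadratic penalization in the orthogonal directions if one wants strict local maximum, which is harmless for a $C^1$ test since only the gradient enters the inequality). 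Then $u - \phi$ has a local max at $x_0 \in \omegb^c$, so the viscosity inequality $D\phi(x_0)\cdot\gamma(x_0) \le g(x_0)$ holds; but $D\phi(x_0)\cdot\gamma(x_0) = -\varphi'(t_0) > -g(X_y(t_0))\cdot$ — wait, I must track the sign carefully: $\frac{d}{dt}\phi(X_y(t))\big|_{t_0} = D\phi(x_0)\cdot\dot X_y(t_0) = -D\phi(x_0)\cdot\gamma(x_0) \ge -g(x_0)$, hence $\varphi'(t_0) \ge -g(x_0)$... this shows that the correct monotone quantity is actually $w(t) = u(X_y(t)) - \int_0^t g(X_y(s))\,ds$ with $w$ \emph{nonincreasing} is the wrong sign too; I would simply recompute and see that along the flow $\frac{d}{dt}\big[u(X_y(t)) + \int_0^t g\big] \ge 0$ in the viscosity sense gives exactly $u(y) = w(0) \le w(t)$, matching \eqref{maxatbdr}. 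The sign bookkeeping is routine once the test-function mechanism is set up.

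The main obstacle is precisely this passage from the one-dimensional viscosity inequality along the curve to a legitimate test function on $\R^N$ realizing the prescribed directional derivative $D\phi(x_0)\cdot\gamma(x_0)$ — one must be sure $\gamma(x_0)\neq 0$ so the direction is nondegenerate, and one must produce $\phi$ whose restriction to the curve dominates $\varphi$ near $t_0$ with the right first-order behavior. Away from that, the argument is the standard "viscosity solutions are monotone along characteristics" lemma; the integral term $\int_0^t g(X_y(s))\,ds$ is smooth in $t$ and causes no trouble, and passing $t \uparrow \tau_y$ is justified by continuity of all terms (using $u \in \mathrm{USC}$ and local boundedness to handle $u(X_y(t))$ at the endpoint). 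For part (b), apply part (a) to $v$ with reversed inequalities, or run the identical argument showing $t \mapsto v(X_y(t)) + \int_0^t g(X_y(s))\,ds$ is nonincreasing.
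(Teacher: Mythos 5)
There is a genuine gap at the step you yourself flag as ``the main obstacle'': the passage from the one--dimensional touching information along the curve to a legitimate test function on $\R^N$. Knowing that $t\mapsto u(X_y(t))$ is touched from above at $t_0$ by a smooth $\varphi$ tells you nothing about $u$ \emph{off} the curve. Since $u$ is merely usc and locally bounded, it can perfectly well satisfy $u(x)\ge u(x_0)+1$ at points $x$ arbitrarily close to $x_0$ but not on the trajectory; then $u-\phi$ has no local maximum at $x_0$ for \emph{any} continuous $\phi$, and in particular no ``quadratic penalization in the orthogonal directions'' can repair this. So the viscosity inequality $D\phi(x_0)\cdot\gamma(x_0)\le g(x_0)$ is never legitimately invoked, and the proof does not close. (The sign confusion in the middle of your argument is a separate, more cosmetic issue, but it should also be resolved rather than declared routine.)

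The paper's proof shows how to get around exactly this difficulty: it is a doubling-of-variables argument. One tests the one--dimensional inequality for $w(s)=u(X_y(t-s))$ not by transferring the test function to $\R^N$ at a point of the curve, but by maximizing
$$u(x)-\frac{|X_y(t-s)-x|^2}{\eps^2}-\varphi(s)$$
jointly in $(x,s)$. The penalization term forces the maximum point $(x,s)$ to lie \emph{near} the curve (with $|X_y(t-s)-x|^2/\eps^2\to0$), but $x$ need not be on it; the viscosity inequality for $u$ is then applied at $x$ with the genuine test function $x\mapsto \frac{|X_y(t-s)-x|^2}{\eps^2}+\varphi(s)$, the $s$-maximality gives a second relation, and the two are combined using the Lipschitz continuity of $\gamma$ (or \eqref{prop-imp}) to absorb the cross term $O(|X_y(t-s)-x|^2/\eps^2)$ before sending $\eps\to0$. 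If you want to salvage your argument, replace the ``transfer to a test function at $x_0$'' step by this penalization; the rest of your outline (monotonicity of $w$, comparison with the $C^1$ solution of the ODE, continuity up to $t=\tau_y$ via upper semicontinuity) is consistent with what the paper does.
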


 \begin{figure}[h!]
 \includegraphics[scale=1.00]{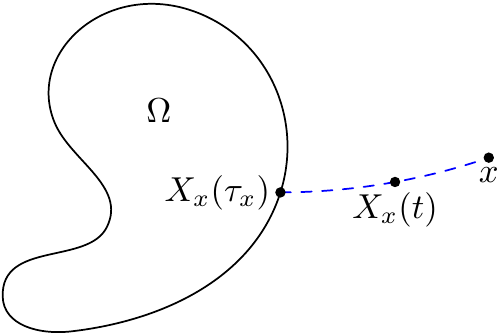}
 \caption{The curve $X_x(t)$ of (BC1) and the hitting time $\tau_x$.}
 \end{figure}

\begin{proof} The proof is inspired by an argument of  G. Barles, S. Mirrahimi, B. Perthame and P.E. Souganidis \cite {bmps}. We only prove (a) since the proof of (b) is similar.
Let $t\in(0,\tau_y]$ and define
$$w(s):=u(X_y(t-s))\quad\text{for }s \geq 0 .$$ 
If we can prove that the usc function $w$ is a subsolution of
\begin{align}
\label{w_eq}
\frac{dw}{ds}(s)=g(X_y(t-s))\quad\text{for  } s>0,
\end{align}
then by the comparison principle
$$ w(s) \leq w(0) + \int_0^s g(X_y(t-\tau))d\tau\quad\text{for  } s>0,$$
since the right-hand side is the $C^1$-solution of \eqref{w_eq} with
same initial data $w(0)$. Now \eqref{maxatbdr} follows by choosing
$s=t$. 

To prove that $w$ is a supersolution of \eqref{w_eq}, we take any
smooth test-function $\varphi$ and any point $\bar s >0 $ such that
$w-\varphi$ has a local strict maximum point 
at $\bar s$. Note that $X_y(t-\bar s)\in \omegb^c$ since $\bar s >0 $. Next we introduce the functions
\begin{align*}
\phi(x,s)=\frac{|X_y(t-s)-x|^2}{\eps^2}+\varphi(s)\qquad\text{and}\qquad \psi(x,s)=u(x)-\phi(x,s).
\end{align*}
For $\eps>0$ small enough, classical arguments show that $\psi$ has a
maximum point near $(X_y(t-\bar s),\bar s)$ (depending on $\eps$) that we also call $(x,s)$. Moreover
\begin{equation}\label{kp1}
\frac{|X_y(t-s)-x|^2}{\eps^2}\ra0\quad\text{and}\quad
s \ra\bar s\qquad \text{as}\qquad \eps\ra0.
\end{equation}
Therefore $x \in \omegb^c$ for $\eps$ small enough, and since $u$ is a
subsolution of \eqref{BC} and $u-\phi(\cdot,t)$ has a local maximum at $x$,
$$-\frac{2(X_y(t-s)-x)}{\eps^2}\cdot \gamma(x)\leq g(x).$$
Since $\tau \mapsto\psi(x,\tau)$ is a $C^1$-function having a
local maximum at $\tau=s$, we also have
$$\frac{2(X_y(t-s)-x)}{\eps^2}\cdot \dot X_y(t-s)-\dot
\varphi(s)=\frac{\del\psi}{\del t}(x,s) =0, $$
and we can conclude using $\dot X_y(t-s)=-\gamma (X_y(t-s))$ and Lipschitz continuity of $\gamma$ that
\begin{align}
\dot\varphi(s) = &- \frac{2(X_y(t-s)-x)}{\eps^2}\cdot \gamma(X_y(t-s)) \nonumber \\
\leq & - \frac{2(X_y(t-s)-x)}{\eps^2}\cdot \gamma(x) - \frac{2(X_y(t-s)-x)}{\eps^2}\cdot \left(\gamma(X_y(t-s))-\gamma (x)\right)\nonumber \\
\leq & \ \ g(x) + O\Big(\frac{|X_y(t-s)-x|^2}{\eps^2}\Big)\; . \label{urg}
\end{align}
In view of \eqref{kp1}, we can send $\eps\ra0$ to find that
$\dot\varphi(\bar s)\leq g(X_y(t-\bar s))$.
\end{proof}

To allow for convex domains with corners and $\gamma=n$, we need to relax the
Lipschitz assumption on $\gamma$ in (BC1) and impose only a one-sided
Lipschitz condition
\smallskip

\begin{itemize}
\item[(BC1')] The functions $\gamma : \omegb^c \to \R^N$ and $g :
  \omegb^c \to \R$ are bounded continuous functions,
\begin{equation}\label{prop-imp}
 (\gamma (x) - \gamma (y))\cdot (x-y) \geq - K |x-y|^2\; ,
\end{equation}
for some constant $K$ and for all $x,y \in \omegb^c$, and there
  exists $\nu >0$ such that $\gamma (x) \cdot n (x) \geq \nu$ for any
  $x \in \domeg$.
\end{itemize}
\medskip

We state a slight generalization of Lemma \ref{backtoboundary}.

\begin{lemma} \label{rem22} The results of Lemma \ref{backtoboundary}
  remain valid if we replace (BC1) by (BC1'). 
\end{lemma}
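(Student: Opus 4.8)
The plan is to revisit the proof of Lemma~\ref{backtoboundary} and check that the only place where the full Lipschitz continuity of $\gamma$ was used can be replaced by the one-sided condition \eqref{prop-imp}. First I would note that the construction of the doubled test function $\phi(x,s)=|X_y(t-s)-x|^2/\eps^2+\varphi(s)$ and $\psi=u-\phi$ goes through verbatim under (BC1'), since it only uses continuity and boundedness of $\gamma$ to guarantee existence of the approximate maximum point $(x,s)$ and the limit \eqref{kp1}; in particular the ODE \eqref{ode} still has a solution (possibly not unique, but \eqref{prop-imp} gives uniqueness and well-posedness in the relevant direction, which is all we use). Likewise the viscosity subsolution inequality $-2(X_y(t-s)-x)\cdot\gamma(x)/\eps^2\le g(x)$ and the identity from $\partial\psi/\partial t(x,s)=0$ are unchanged.

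The key point is the estimate \eqref{urg}. There one writes $\dot\varphi(s)=-2(X_y(t-s)-x)\cdot\gamma(X_y(t-s))/\eps^2$ and adds and subtracts $\gamma(x)$, producing the cross term $-2(X_y(t-s)-x)\cdot(\gamma(X_y(t-s))-\gamma(x))/\eps^2$. In the original proof this was bounded by Cauchy--Schwarz plus Lipschitz continuity of $\gamma$, yielding $O(|X_y(t-s)-x|^2/\eps^2)$. Under (BC1') I would instead apply \eqref{prop-imp} directly with the pair of points $X_y(t-s)$ and $x$: setting $x_1=X_y(t-s)$, $x_2=x$, we have $(\gamma(x_1)-\gamma(x_2))\cdot(x_1-x_2)\ge -K|x_1-x_2|^2$, hence $-\,2(x_1-x_2)\cdot(\gamma(x_1)-\gamma(x_2))/\eps^2\le 2K|x_1-x_2|^2/\eps^2=O(|X_y(t-s)-x|^2/\eps^2)$. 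So the cross term is controlled with exactly the same error term, and the rest of \eqref{urg} and the passage $\eps\to0$ using \eqref{kp1} are identical. This gives $\dot\varphi(\bar s)\le g(X_y(t-\bar s))$, so $w$ is a viscosity supersolution of \eqref{w_eq}, and the comparison-principle step finishes part (a) as before; part (b) is symmetric.

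The main (very mild) obstacle is bookkeeping around the dynamics of \eqref{ode}: with $\gamma$ only defined on $\omegb^c$ and merely one-sided Lipschitz, one should make sure the integral curve $X_y(\cdot)$ is well-defined up to the hitting time $\tau_y$ and stays in $\omegb^c$ for $s\in(0,\tau_y)$, so that $\gamma(X_y(t-s))$ makes sense wherever it is evaluated in the argument. This is exactly what (BC2) together with \eqref{prop-imp} provides (the one-sided condition guarantees backward uniqueness for $\dot X=-\gamma(X)$, hence forward uniqueness of the reversed flow used via $w(s)=u(X_y(t-s))$). Once this is observed, no calculation in the proof of Lemma~\ref{backtoboundary} changes except the single inequality above, which is why the generalization is essentially immediate.

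\begin{proof}
We only indicate the modifications needed in the proof of Lemma~\ref{backtoboundary}, and only for part (a), since (b) is symmetric. Under (BC1') the vector field $-\gamma$ is continuous and one-sided Lipschitz on $\omegb^c$ in the sense of \eqref{prop-imp}, so \eqref{ode} has a unique solution $X_y(\cdot)$, which by (BC2) reaches $\omegb$ at the finite time $\tau_y$ and satisfies $X_y(s)\in\omegb^c$ for $0\le s<\tau_y$. Fix $t\in(0,\tau_y]$ and set $w(s)=u(X_y(t-s))$ for $s\ge0$, as before. The reduction to showing that $w$ is a viscosity supersolution of \eqref{w_eq} is unchanged, and so is the construction, for a test function $\varphi$ with $w-\varphi$ having a strict local maximum at $\bar s>0$, of the maximum point $(x,s)$ of $\psi(x,s)=u(x)-\phi(x,s)$ with $\phi(x,s)=|X_y(t-s)-x|^2/\eps^2+\varphi(s)$, together with the limits \eqref{kp1}. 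In particular $x\in\omegb^c$ for $\eps$ small, the subsolution inequality
$$-\frac{2(X_y(t-s)-x)}{\eps^2}\cdot\gamma(x)\le g(x)$$
holds, and differentiating $\tau\mapsto\psi(x,\tau)$ at $\tau=s$ gives
$$\dot\varphi(s)=\frac{2(X_y(t-s)-x)}{\eps^2}\cdot\dot X_y(t-s)=-\frac{2(X_y(t-s)-x)}{\eps^2}\cdot\gamma(X_y(t-s)).$$
Now, instead of invoking the Lipschitz continuity of $\gamma$, we apply \eqref{prop-imp} with the pair of points $X_y(t-s),\,x\in\omegb^c$:
\begin{align*}
\dot\varphi(s)&=-\frac{2(X_y(t-s)-x)}{\eps^2}\cdot\gamma(x)-\frac{2(X_y(t-s)-x)}{\eps^2}\cdot\big(\gamma(X_y(t-s))-\gamma(x)\big)\\
&\le g(x)+\frac{2K|X_y(t-s)-x|^2}{\eps^2}=g(x)+O\Big(\frac{|X_y(t-s)-x|^2}{\eps^2}\Big).
\end{align*}
Sending $\eps\to0$ and using \eqref{kp1} yields $\dot\varphi(\bar s)\le g(X_y(t-\bar s))$, so $w$ is a viscosity supersolution of \eqref{w_eq}. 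The comparison-principle step with the $C^1$ solution $w(0)+\int_0^s g(X_y(t-\tau))\,d\tau$ then gives \eqref{maxatbdr}, exactly as in the proof of Lemma~\ref{backtoboundary}.
\end{proof}
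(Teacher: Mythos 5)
Your proof is correct and follows the paper's own argument: rerun the proof of Lemma \ref{backtoboundary} and observe that the cross term in \eqref{urg} only needs the one-sided bound \eqref{prop-imp}, which you make explicit (the paper merely asserts that (BC1') suffices there), together with Peano existence plus one-sided-Lipschitz uniqueness for the trajectory. The only point where the paper is slightly more careful is the endpoint $t=\tau_y$: since under (BC1') $\gamma$ and $g$ are defined only on $\omegb^c$ and the trajectory is defined on $[0,\tau_y)$ and extended by continuity, the paper first proves the inequality for $t<\tau_y$ and then lets $t\uparrow\tau_y$ using the upper semicontinuity of $u$, whereas you take $t\in(0,\tau_y]$ directly; this is harmless but worth noting.
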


\begin{proof} We first remark that (BC1') ensures the existence and
  uniqueness of the trajectory $X_y$ as long as it remains in
  $\omegb^c$, existence follows from Peano's Theorem while
  \eqref{prop-imp} provides the uniqueness. Therefore the trajectory
  exists on $[0,\tau_y)$ and can be extended by continuity to
  $t=\tau_y$. 

To complete the proof, we let $t < \tau_y$ and redo the proof of Lemma
\ref{backtoboundary}. The computations are exactly the same, e.g. to
obtain  \eqref{urg}, (BC1') is sufficient. To extend the result to
$t=\tau_y$, we just remark that 
$$\limsup_{t\uparrow \tau_y} u(X_y(t)) \leq u(X_y(\tau_y))\; ,$$
by the upper-semicontinuity of $u$.
\end{proof}

We conclude this section by an important discussion on the consequences of 
Lemma~\ref{backtoboundary}. If $u$ is a solution of (\ref{E})-(\ref{BC}), then $Du(x)\cdot \gamma (x)  = g(x) $ in $\omegb^c$ and we have, for all $y\in \omegb^c$
$$u(y) = \int_0^{\tau_y} g(X_y(s))ds + u(X_y(\tau_y)) .$$
If $g\equiv 0$ on $\omegb^c$, then $u$ can be bounded under suitable assumptions on the
(other) data. This is the case we face in
Section~\ref{sec:results} below for convex domains and extended homogeneous Neumann boundary conditions.
But if $g$ is not identically $0$ on $\omegb^c$, then $u$ can be unbounded and its growth is governed by the properties of $g$ and $\tau_y$.

The behavior of $u$ at infinity is important in our framework to
insure that the nonlocal terms are well-defined: We need some
integrability property like e.g., for any $x\in \omegb$ and $\delta>0$,
\begin{equation}\label{integ}
\int_{|z|\ge \delta} |u(x+j(x,z))|\, d\mu(z) <+\infty.
% \quad\text{for
%  any}\quad \delta>0. 
\end{equation}
Such condition now connects the assumptions we have to place on
$\tau_x$, $g$, $j$ and $\mu$.
 
To fix ideas, we are going to assume in Section~\ref{sec:5.2} that:
\begin{itemize}
\item [(BC3)] Either the function $g$ has a compact support in $\R^N$
  or there exists $\tilde c>0$ such that for $\tau_x$ from (BC2),
\begin{equation}
\tau_x \leq \tilde c (1+|x|)\qquad \hbox{and}\qquad \sup_{x\in \ol\Omega}\int_{|z|\ge \delta} |j(x,z)|\ d\mu(z)<\infty\ \hbox{for any $\delta >0$}\; .
\end{equation}
\end{itemize}

We briefly comment on this assumption. When $g$ has compact support,
the solutions are expected to be bounded by Lemma \ref{backtoboundary}
and no additional assumption on $j$ and $\mu$ is needed.  On the
contrary, if e.g. $g\equiv 1$, then the integral of $g$ in
(\ref{maxatbdr}) suggests that $u$ and $\tau_x$ have the same growth
and (BC3) imposes a linear growth. Next one has to impose suitable
hypothesis on $j$ and $\mu$ to satisfy (\ref{integ}). This
is obtained through the second part of (BC3) on the
$\mu$-integrability of $j$ away from $0$.

\section{The homogeneous Neumann condition in convex non-smooth domains}
\label{sec:results}

In this section we consider the homogeneous Neumann problem, namely
equation \eqref{E} and boundary condition \eqref{BC} with $g\equiv0$
and $\gamma=n$, the unit outward normal vector field in $\Omega^c$ (see below)
\begin{equation}
Du(x)\cdot n(x)=\ 0 \quad\text{in}\quad \Omega^c,\label{BC1}
\end{equation}
in the case when $\Omega$ is a convex, possibly unbounded and non-smooth domain.

At $x \in \domeg$, the set of outward normals $N_\Omega(x)$ can be
defined as
$$N_\Omega(x):=\Big\{n\in \R^N: |n|=1, n\cdot(x-y)\geq0 \text{
for all }y\in \omegb\Big\}.$$
This set is a singleton at any point where $\del\Omega$ is $C^1$ and  part of a
convex cone where $\del\Omega$ has a corner. Let $\bar d$ be the distance function to $\omegb$, and note that $\bar
d\equiv 0$ in $\bar \Omega$ and $\bar d>0$ in $\omegb^c$. Moreover, $\bar
d$ is convex and belongs to $C^0(\R^N)\cap C^1(\omegb^c)$ since
$\omegb$ is a closed convex subset of $\R^N$. In $\bar\Omega^c$, we
now {\em define} the outward unit normal vector $n$ in the only
natural way by setting $n=D\bar d$. Note that the two definitions
are consistent in the sense that
$$N_\Omega(x)=\Big\{n\in\R^N: n=\lim_{k\ra\infty}D\bar
d(x_k)
\text{ for some $x_k\ra x$}\Big\}\quad\text{for all}\quad x\in\del\Omega,$$ 
and since $\bar d$ is convex, the function $n$ 
satisfies (BC1') with $K=0$.

To define the concept of viscosity solutions for this problem, we need
the operators $\I_\delta$, $\I^\delta$, and $\mathcal{F}
$ defined as follows
\begin{align*}
&\I_\delta[\phi](x)=\int_{|z|<\delta}\phi(x+j(x,z))-\phi(x)-D\phi(x)\cdot
j(x,z) 
1_B(z)\, d\mu(z) ,\\
&\I^\delta[u](x)=\int_{|z|\ge
  \delta}u(x+j(x,z))-u(x)-D\phi(x)\cdot j(x,z) 1_B(z)\, d\mu(z),\\
&\mathcal{F}
[u,\phi]
(x)= F(x,u(x),D\phi(x),D^2\phi(x), \I_\delta[\phi](x)+\I^\delta[u](x)).
\end{align*}

 Under assumption \eqref{Levy}, $\I_\delta[\phi]$ is well-defined for
 $\phi\in C^2$. For $\I^\delta[u]$ we need some integrability
 condition a la (\ref{integ}), see the discussion at the
 end of Section~\ref{prelim}. In this section $g\equiv
 0$ on $\omegb^c$, so (\ref{integ}) will be automatically satisfied whenever 
$u$ is bounded.

\begin{definition}
\label{def_vsol}
(i) A locally bounded, usc function $u:\R^N\to \R$ is a {\em viscosity subsolution} of
\eqref{E}--\eqref{BC} if it satisfies \eqref{integ}, and for any test
function $\phi\in C^2(\R^N)$ and for any maximum point $x_0\in\R^N$ of
$u-\phi$ in $B_{c(j)\delta}(x_0)$ where $c(j)$ is defined in \eqref{Levy}, we have  
\begin{align*}
\begin{cases}
\mathcal{F}
[u,\phi]
(x_0) \le 0& \hbox{if}\quad
x_0\in \Omega,\\[0.2cm]
\displaystyle\min\bigg(\mathcal{F}
[u,\phi]
(x_0), \inf_{n\in N_\Omega(x_0)}D\phi(x_0)\cdot n\bigg)\le 0 &\hbox{if}\quad
x_0\in \partial \Omega,\\[0.2cm]
D\phi(x_0)\cdot n(x_0)\le 0 & \hbox{if}\quad x_0\in \omegb^c
\end{cases}
\end{align*}
(ii) A locally bounded, lsc function $v:\R^N\to \R$ is a {\em viscosity supersolution} of
\eqref{E}--\eqref{BC} if it satisfies \eqref{integ}, and for any test function $\phi\in C^2(\R^N)$ and for any minimum point $x_0\in \R^N$ of the function $u-\phi$ in $B_{c(j)\delta}(x_0)$ where $c(j)$ is defined in \eqref{Levy}, we have  
\begin{align*}
\begin{cases}
\mathcal{F}
[u,\phi]
(x_0) \ge 0& \hbox{if}\quad
x_0\in \Omega,\\[0.2cm]
\displaystyle \max\bigg(\mathcal{F}
[u,\phi]
(x_0),\sup_{n\in N_\Omega(x_0)} D\phi(x_0)\cdot n\bigg)\ge 0 &\hbox{if}\quad
x_0\in \partial \Omega,\\[0.2cm]
D\phi(x_0)\cdot n(x_0)\ge 0 & \hbox{if}\quad x_0\in \omegb^c
\end{cases}
\end{align*}
(iii)  A {\em viscosity solution} $u$ of \eqref{E}--\eqref{BC} is a
locally bounded function whose upper and lower semicontinuous envelopes are
respectively sub- and supersolution of the problem.
\end{definition}

This definition is a natural extension of the definition given in
\cite{BCI} to the Neumann type boundary value problem.

\begin{remark}
Two useful equivalent definitions can be given: (1) We can replace
$\I^\delta[u]$ by $\I^\delta[\phi]$ in the above definition if
local maximum/minimum points are replaced by global ones. (2) In the
subsolution definition, $(D\phi(x_0),D^2\phi(x_0))$ can be replaced by
elements $(p,X)$ in the so-called super-jet $J^{+}u(x_0)$ if
$X\leq D^2\phi(x_0)$. In the definition of supersolutions, you can
similarly use $(q,Y)\in J^{-}u(x_0)$ if $Y\geq D^2\phi(x_0)$. The
second definition is useful for comparison proofs, and the proofs of
these claims easily follow from the arguments for similar results in
\cite{BI}.
\end{remark}

We now state the assumptions -- remarking that the assumptions on $F$
will be as general as for the whole space case $\Omega=\R^N$
without boundary conditions. For convenience we use the assumptions of
\cite{BI}, but see Remark \ref{alt_assumptions} below for more general assumptions.
For the nonlocal part we assume that  
\smallskip
\begin{itemize}
\item[(A1)]
Assumption \eqref{Levy} holds, and there is a constant $\bar c>0$ such that for all $x,y\in\R^N$,
\begin{align*}
&
\int_{\R^N}\frac{|j(x,z) - j(y,z)|^2}{|x-y|^2}+\int_{\R^N\setminus B}\frac{|j(x,z) - j(y,z)|}{|x-y|}
\mu(dz)\le {\bar c}.
\end{align*}
\end{itemize}
\smallskip
The non-linearity $F$ satisfies the following classical
  assumptions
\smallskip
\begin{itemize}
\item [(A2)]  There exists $ \lamb0 >0$ such that for any $x\in \R^d
$, $u,v\in \R$, $p\in \R^d
$, $X\in \SN$ and $l\in \R$,
$$ 
 F(x,u,p,X,l)- F(x,v,p, X,l)\ge  \lamb0 (u-v) \quad \mbox{when} \quad u \ge v.
$$
\item [(A3-1)] $F$ is continuous, and for any $R>0$, there exist moduli of continuity $\omega,
  \omega_R$ such that, for any $|x|,|y|\le R$, $|v|\le R$, $l\in\R$
  and for any $X,Y \in \SN$ satisfying
\begin{equation}\label{ineqmat}
\left[\begin{array}{cc}X&0\\0&-Y\end{array}\right]
 \le \frac1\eps \left[\begin{array}{cc}I&-I\\-I&I\end{array}\right]
+ r(\beta) \left[\begin{array}{cc}I&0\\0&I\end{array}\right]
\end{equation}
for some $\eps >0$ and $r(\beta) \to 0$ as $\beta \to 0$, then, if
$s_i(\beta) \to 0$ as $\beta \to 0$ for $i=1$ and $2$, we have
\begin{align}
\label{cond:a31}
\begin{split}
& F(y,v,\eps^{-1} (x-y)+s_1(\beta), Y,l) - F(x,v,\eps^{-1} (x-y) +
  s_2(\beta),X,l)\\ 
& \le \omega (\beta)+\omega_R (|x-y|+\eps^{-1}|x-y|^2).
\end{split}
\end{align}
\item [(A3-2)] For any $R>0$, $F$ is uniformly continuous on 
$\R^n \times [-R,R] \times B_R \times D_R \times \R$ where 
$D_R := \{ X \in \SN;\ |X| \leq R\}$ and there exist a modulus 
of continuity $\omega_R$ such that, for any $x,y \in \R^d$, 
$|v|\le R$,$ l\in\R$ and for any $X,Y \in \SN$ satisfying (\ref{ineqmat}) and
$\eps >0$, we have
\begin{equation}\label{cond:a32}
F(y,v,\eps^{-1} (x-y), Y,l) - F(x,v,\eps^{-1} (x-y),X,l) \le 
\omega_R (\eps^{-1} |x-y|^2 + |x-y|).
\end{equation}
\item [(A4)] $F(x,u,p,X,l)$ is nondecreasing and Lipschitz continuous
  in $l$, uniformly with respect to all the other variables.
\medskip
\item [(A5)] $M_F:=\sup_{x\in\Omega}|F(x,0,0,0,0)|<\infty.$
\end{itemize}

\medskip

Assumption (A3-1) and (A3-2) are two versions of assumption (3.14) in
the Users' Guide \cite{cil} for possibly unbounded domains. These
assumptions along with (A4) imply that equation \eqref{E} is
degenerate elliptic. Assumptions (A3-1) allows more general
$x$-dependence in the equation (e.g. HJB equations with
 at most linear growth in the derivatives and general $x$-depending coefficients), while (A3-2) allows more general gradient dependence in
the equation (e.g. HJB equations with coefficients which are bounded in $x$ but possibly with $x$-independent superlinear gradient terms).

To be more explicit, consider the linear equation (\ref{example}). The above assumptions hold
if $a \geq 0$, $A(x)= \sigma(x)\sigma^T(x)$ for some matrix
$\sigma$, and, for (A2), $\lambda(x) \geq \lambda_0 >0$ in $\R^N$. Assumptions 
  (A3-1) and (A3-2) are the following two variants of conditions on
  $\sigma, b, \lambda, f$: (A3-1) is satisfied if
  $\sigma$ and $b$ are bounded and {\em locally} Lipschitz continuous
  and $\lambda$ and $f$ 
  are continuous. For (A3-2), $\sigma$, $b$ can have
  a linear growth but one needs the {\em global} Lipschitz continuity
  of $\sigma$ and $b$, and the uniform continuity of $\lambda$ and $f$.

In the local case with no 
 $\I$-dependence in the equation,  assumptions (A2) 
-- (A5) imply comparison, uniqueness, and existence (via Perron's
method) of a bounded viscosity solution of \eqref{E}--\eqref{BC}, cf. e.g.
\cite{cil}. In the nonlocal case when $\Omega=\R^N$ (and no Neumann conditions,
$\Omega^c=\emptyset$), we have 
the following rather classical result which we will need later.
\begin{proposition}[Results for $\Omega=\R^N$]
\label{propRN}
Assume $\Omega=\R^N$ and (A1), (A2), (A4) hold along with either
(A3-1) or (A3-2). 
\medskip

\noindent 
(a) If $u$ and $v$ are respectively an usc bounded above
subsolution and a lsc bounded below supersolution of \eqref{E}
in $\Omega=\R^N$, then $u\leq v$ in $\R^N$.

\medskip
\noindent 
(b) Assume also (A5) holds, then there exists a
unique bounded viscosity solution $u$ of \eqref{E} in $\Omega=\R^N$
satisfying
\begin{align}
\label{bnd_soln}
|u(x)|\leq \frac{M_F}{ \lamb0}\quad\text{in}\quad \R^N.
\end{align}
\end{proposition}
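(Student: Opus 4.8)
The plan is to prove Proposition~\ref{propRN} by a standard viscosity-solution argument adapted to the nonlocal setting, following the approach of \cite{BI}.

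For part (a), I would argue by contradiction, assuming $M := \sup_{\R^N}(u-v) > 0$. The first obstacle is the non-compactness of $\R^N$: since $u$ is bounded above and $v$ bounded below but they need not decay, the supremum need not be attained. I would therefore introduce a localizing term and double the variables, considering
\begin{align*}
\Phi_{\eps,\beta}(x,y) = u(x) - v(y) - \frac{|x-y|^2}{\eps^2} - \beta\big(\langle x\rangle + \langle y\rangle\big),
\end{align*}
where $\langle x\rangle = (1+|x|^2)^{1/2}$, or more simply a penalization $\beta\langle x\rangle$ on one variable if one prefers; the key is that the $\beta$-term forces a maximum point $(\bx,\by)$ to exist for each fixed $\eps,\beta$, and standard arguments give $|\bx-\by|^2/\eps^2 \to 0$ and $\beta\langle\bx\rangle \to 0$ along appropriate subsequences as $\beta\to 0$, while $\Phi_{\eps,\beta}(\bx,\by)$ stays close to $M>0$. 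One then applies the nonlocal maximum principle (the Jensen--Ishii lemma for integro-differential equations, as in \cite{BI}): for each $\delta>0$ there exist $X,Y\in\SN$ satisfying the matrix inequality \eqref{ineqmat} such that, using the equivalent definition with $\I^\delta$ evaluated on the functions $u,v$ themselves,
\begin{align*}
F\big(\bx,u(\bx),\eps^{-1}(\bx-\by)+s_1(\beta),X,\I_\delta[\phi_x](\bx)+\I^\delta[u](\bx)\big) &\le 0,\\
F\big(\by,v(\by),\eps^{-1}(\bx-\by)-s_2(\beta),Y,\I_\delta[\phi_y](\by)+\I^\delta[v](\by)\big) &\ge 0,
\end{align*}
where $s_i(\beta)$ come from differentiating the $\beta$-penalization and tend to $0$. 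Subtracting, using (A2) to produce $\lamb0 M$ on one side, using (A3-1) or (A3-2) to control the $F$-difference by $\omega(\beta)+\omega_R(|\bx-\by|+\eps^{-1}|\bx-\by|^2)$, and using (A4) together with (A1) to bound the difference of nonlocal terms: the small-jump parts $\I_\delta$ are controlled by the matrix inequality \eqref{ineqmat} integrated against $|j|^2$ (this uses the first integral in (A1) and the $O(\delta)$-smallness of $\int_{|z|<\delta}|z|^2 d\mu$), and the large-jump parts $\I^\delta[u](\bx)-\I^\delta[v](\by)$ are $\le$ an $\eps,\delta$-dependent error plus terms handled by the second integral in (A1) — crucially, at a maximum point of $\Phi_{\eps,\beta}$ one has $u(\bx+j(\bx,z))-v(\by+j(\by,z)) \le u(\bx)-v(\by) + (\text{quadratic terms})$, which gives the right sign. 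Sending $\delta\to 0$, then $\beta\to 0$, then $\eps\to 0$ in the correct order yields $\lamb0 M \le 0$, a contradiction.

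For part (b), existence follows from Perron's method in the nonlocal viscosity framework (again as in \cite{BI}): the constant functions $\pm M_F/\lamb0$ are respectively a super- and a subsolution of \eqref{E} by (A5) and (A2) (indeed $F(x,\pm M_F/\lamb0,0,0,0) \ge F(x,0,0,0,0) \pm \lamb0 M_F/\lamb0$ has the correct sign since $\I[\text{const}]=0$), so one takes $u := \sup\{w : w \text{ subsolution}, -M_F/\lamb0 \le w \le M_F/\lamb0\}$, shows via the standard bump-construction argument that its usc envelope is a subsolution and its lsc envelope is a supersolution, and then invokes part (a) to conclude $u^* \le u_*$, hence $u$ is continuous and is the viscosity solution. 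The bound \eqref{bnd_soln} is immediate from the construction, and uniqueness is part (a).

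The main obstacle is the interplay between the non-compactness penalization and the nonlocal terms: one must ensure that the $\beta\langle x\rangle$ localizer does not spoil the estimate on $\I^\delta[u](\bx) - \I^\delta[v](\by)$, i.e. that the extra gradient corrections $s_i(\beta)$ and the extra contribution of the penalizer inside the nonlocal operator are genuinely $o(1)$ as $\beta\to 0$ uniformly enough. This is where (A1) (Lipschitz-in-$x$ control of $j$, both the $L^2$ near zero and $L^1$ away from zero) is essential, and where the precise order of limits $\delta\to0$, $\beta\to0$, $\eps\to0$ matters. The rest is routine adaptation of \cite{BI,cil}.
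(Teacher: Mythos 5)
Your overall strategy coincides with the paper's: the paper disposes of part (a) by citing Section 5 of \cite{BI} and of part (b) by noting that $\pm M_F/\lamb0$ are super-/subsolutions (your computation via (A2), (A5) and $\I[\mathrm{const}]=0$ is exactly the right one) and applying Perron's method plus (a). So in substance you have reconstructed the intended argument, and the Perron part is fine.

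There is, however, one step in your sketch of (a) that fails as written: the localizer $\beta\langle x\rangle$ is unbounded, and when you transfer the maximum-point inequality into the large-jump term you pick up contributions of the form $\beta\int_{|z|\ge 1}\bigl[\langle \bx+j(\bx,z)\rangle-\langle \bx\rangle\bigr]\,d\mu(z)$, which you can only bound by $\beta\int_{|z|\ge 1}|j(\bx,z)|\,d\mu(z)$. Nothing in the standing hypotheses makes this finite: \eqref{Levy} controls $|j(x,z)|$ only for $|z|<1$, and the second integral in (A1) controls the \emph{difference} $|j(x,z)-j(y,z)|/|x-y|$, not $|j(x,z)|$ itself --- for the fractional Laplacian ($j(x,z)\equiv z$, $d\mu=c_\alpha|z|^{-N-\alpha}dz$) with $\alpha\le 1$ this integral is actually infinite, and the paper introduces the \emph{extra} hypothesis (BC3) precisely when such integrability is needed. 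So your appeal to ``(A1), $L^1$ away from zero'' does not close this step. The standard repair --- the one the paper itself uses in the proof of Theorem~\ref{compprinciple} --- is to localize with a \emph{bounded} function, e.g.\ $\chi(\beta(|x-x_0|^2+1))$ with $\chi$ bounded, smooth and nondecreasing; its large-jump contribution is then dominated by $\|\chi\|_\infty\,\mu(\{|z|\ge\delta\})$ and vanishes in the limit, while its gradient and Hessian supply the admissible perturbations $s_i(\beta)$, $r(\beta)$ of (A3-1). With that replacement, the rest of your outline (nonlocal Jensen--Ishii, (A2) producing $\lamb0 M$, (A3-1)/(A3-2) for the $x$-dependence, (A4) plus (A1) for the nonlocal terms, limits in the order $\delta\to0$, $\beta\to0$, $\eps\to0$) is exactly the \cite{BI} proof the paper invokes.
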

 
Part (a) was proved in \cite{BI} (see Section 5), and Part (b) follows from 
part (a) and Perron's method since $M_F/ \lamb0$ and $-M_F/\lamb0$ are
super and subsolutions of \eqref{E}. Similar results have been given
e.g. in \cite{BBP,pha,JK,BCI}.

Now we come to the first main result of this paper, a comparison
result for the boundary value problem \eqref{E}--\eqref{BC1}.

\begin{theorem}[Comparison I]
\label{compprinciple}
Assume (A1), (A2), (A4) hold along with either (A3-1) or (A3-2). If
$u$ and $v$ are respectively  an usc bounded above subsolution and a
lsc bounded below supersolution  of \eqref{E}--\eqref{BC1}, then
$u\le v$ in $\R^N$. 
\end{theorem}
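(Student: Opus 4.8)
The plan is to argue by contradiction and run the standard doubling-of-variables comparison argument for PIDEs from \cite{BI}, the only new ingredient being a convexity argument that pushes the maximum points of the doubled test function back into $\omegb$, after which the exterior condition \eqref{BC1} plays no further role. Suppose then that $M:=\sup_{\R^N}(u-v)>0$. After a translation (which preserves (A1)--(A5) and the convexity of $\Omega$) we may assume $B_r(0)\subset\Omega$ for some $r>0$. Since $\Omega$ may be unbounded I would localize with $\theta(x):=(1+|x|^2)^{1/2}$, whose gradient is bounded, and for $\eps,\eta>0$ set
\begin{equation*}
\Psi(x,y):=u(x)-v(y)-\frac{|x-y|^2}{2\eps^2}-\eta\bigl(\theta(x)+\theta(y)\bigr).
\end{equation*}
As $u$ is usc and bounded above, $v$ is lsc and bounded below, and $\theta\to+\infty$, the supremum of $\Psi$ is attained at some $(\bx,\by)$ (depending on $\eps$ and $\eta$), and the classical penalization estimates yield $\sup\Psi\to M$, $\eps^{-2}|\bx-\by|^2\to0$, $\eta\theta(\bx),\eta\theta(\by)\to0$ and $u(\bx)-v(\by)\to M$ as $\eta\to0$ and then $\eps\to0$.

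\emph{Key step: $\bx,\by\in\omegb$.} Here I would exploit that $\bar d=\dist(\cdot,\omegb)$ is convex, belongs to $C^1(\omegb^c)$ with $|D\bar d|\equiv1$ there, and that $n=D\bar d$ on $\omegb^c$. Consequently the curve \eqref{ode} with $\gamma=n$ is the gradient flow of $-\bar d$: it satisfies $\frac{d}{dt}\bar d(X_y(t))=-1$, so $\tau_y=\bar d(y)<\infty$ and $\pi(y):=X_y(\tau_y)$ is the nearest-point projection of $y$ onto the closed convex set $\omegb$. In particular $\pi$ is $1$-Lipschitz, equals the identity on $\omegb$, and, since $0\in\omegb$, the obtuse-angle inequality for convex projections gives $|\pi(y)|\le|y|$, hence $\theta\circ\pi\le\theta$. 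Because $n$ satisfies (BC1') with $K=0$, Lemma~\ref{rem22} applies with $g\equiv0$ and gives $u(y)\le u(\pi(y))$ and $v(y)\ge v(\pi(y))$ for every $y\in\R^N$ (trivially on $\omegb$, and from the lemma on $\omegb^c$). Combining these with $|\pi(\bx)-\pi(\by)|\le|\bx-\by|$ and $\theta\circ\pi\le\theta$ gives $\Psi(\bx,\by)\le\Psi(\pi(\bx),\pi(\by))$; by maximality this is an equality, so $(\pi(\bx),\pi(\by))$ is again a maximum point of $\Psi$, and after relabelling we may assume $\bx,\by\in\omegb$ (the previous estimates being unaffected).

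\emph{The boundary condition drops out, and conclusion.} With $\bx\in\omegb$, test the subsolution at $\bx$ with $\phi_1(x):=v(\by)+\frac{|x-\by|^2}{2\eps^2}+\eta\theta(x)$. The subsolution inequality fails to yield $\mathcal{F}[u,\phi_1](\bx)\le0$ only if $\bx\in\domeg$ and $D\phi_1(\bx)\cdot n\le0$ for some $n\in N_\Omega(\bx)$; but $\by\in\omegb$ gives $(\bx-\by)\cdot n\ge0$, while $B_r(0)\subset\Omega$ gives $\bx\cdot n\ge r$, so
\begin{equation*}
D\phi_1(\bx)\cdot n=\frac{(\bx-\by)\cdot n}{\eps^2}+\eta\,\frac{\bx\cdot n}{(1+|\bx|^2)^{1/2}}\ge \frac{\eta r}{(1+|\bx|^2)^{1/2}}>0,
\end{equation*}
a contradiction. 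Hence $\mathcal{F}[u,\phi_1](\bx)\le0$, and symmetrically $\mathcal{F}[v,\phi_2](\by)\ge0$ with $\phi_2(y):=u(\bx)-\frac{|\bx-y|^2}{2\eps^2}-\eta\theta(y)$. From here the proof is the standard whole-space one: apply the nonlocal version of Jensen--Ishii's lemma to obtain $X,Y\in\SN$ satisfying \eqref{ineqmat} (with the $\eta D^2\theta$-corrections absorbed into the lower-order terms), bound the difference of the local parts of $\mathcal{F}$ by (A3-1) or (A3-2), split the nonlocal terms at $|z|=\delta$ and estimate them by (A1) (the $\I_\delta$-parts through the $C^2$-bounds on the $\phi_i$, the $\I^\delta$-parts through the maximum property of $\Psi$), pass this estimate through $F$ using (A4), and invoke (A2) to arrive at $\lamb0\,(u(\bx)-v(\by))\le\omega(\cdot)+C\eta+o_\delta(1)+o_\eps(1)$; letting $\delta\to0$, $\eta\to0$ and $\eps\to0$ then gives $\lamb0 M\le0$, contradicting $M>0$.

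The part I expect to be the genuine obstacle is the \emph{Key step}: one has to make sure that the convex geometry of $\Omega$ — encoded through $\bar d$, the identity $n=D\bar d$, and Lemma~\ref{rem22} — really does force the doubled maxima into $\omegb$, since this is precisely what decouples the interior equation from the exterior condition \eqref{BC1}. A secondary technical issue is that, for unbounded $\Omega$, the localization $\theta$ must be compatible with the projection $\pi$ (whence the normalization $0\in\Omega$); once $\bx,\by\in\omegb$, the remainder is the usual, if lengthy, full-space nonlocal comparison.
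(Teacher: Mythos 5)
Your \emph{Key step} is exactly the paper's argument: the paper also identifies $X_y(\tau_y)$ with the nearest-point projection $P$ onto the closed convex set $\omegb$, applies Lemma~\ref{rem22} with $g\equiv0$ to get $u(y)-v(y)\le u(P_y)-v(P_y)$, uses the contraction properties of $P$ (both for $|x-y|$ and for the distance to an interior reference point) to force the doubled maximum into $\omegb\times\omegb$, and then kills the boundary alternative at $\del\Omega$ via $(\bx-\by)\cdot n\ge0$ plus a strictly positive contribution of the localization term. So the novel part of your proof is correct and matches the paper.

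There is, however, a genuine gap in the part you defer to ``standard'': your localization $\eta\theta$ with $\theta(x)=(1+|x|^2)^{1/2}$ is incompatible with the nonlocal term under (A1) alone. When you estimate $\I^\delta[u](\bx)-\I^\delta[v](\by)$ through the maximum property of $\Psi$, the right-hand side contains
\begin{equation*}
\eta\int_{|z|\ge \delta}\bigl[\theta(\bx+j(\bx,z))-\theta(\bx)\bigr]\,d\mu(z)
+\eta\int_{|z|\ge \delta}\bigl[\theta(\by+j(\by,z))-\theta(\by)\bigr]\,d\mu(z),
\end{equation*}
and the integrands are only controlled by $|j(\cdot,z)|$. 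Assumption (A1) controls differences $|j(x,z)-j(y,z)|$, not $|j(x,z)|$ itself, so $\int_{|z|\ge\delta}|j(x,z)|\,d\mu(z)$ may well be $+\infty$: take the fractional Laplacian with $\alp\in(0,1]$, where $j(x,z)\equiv z$ makes (A1) trivial but $\int_{|z|\ge1}|z|\,d\mu=\infty$. Your estimate then reads $\I^\delta[u](\bx)-\I^\delta[v](\by)\le+\infty$ and the argument collapses (the finiteness of $\int_{|z|\ge\delta}|j|\,d\mu$ is assumption (BC3), which is not available in Theorem~\ref{compprinciple}). This is precisely why the paper localizes with the \emph{bounded} function $\chi(\beta(|x-x_0|^2+1))$, $\chi$ bounded with $\chi'>0$: its increments under the jumps are uniformly bounded, so the corresponding contribution to the $\I^\delta$-estimate is finite, while $\chi'>0$ and $x_0\in\Omega$ still give the strict positivity needed to avoid the boundary condition (your role of $B_r(0)\subset\Omega$). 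Replacing your $\eta\theta$ by the paper's $\chi$-localization repairs the proof without touching your Key step.
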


Uniqueness of solutions follow, and since $\pm \frac{M_F}{\lamb0}$ are
sub/super solutions of \eqref{E} when (A5) holds, we also get $L^\infty$-bounds.

\begin{corollary}  Assume (A1), (A2), (A4) hold along with either (A3-1) or (A3-2).\!\!\!\!\!
\medskip

\noindent(a) There is not more than one bounded solution of \eqref{E}.
\medskip

\noindent (b) If also (A5) holds, then any solution $u$ of \eqref{E} satisfies
\eqref{bnd_soln}. 
\end{corollary}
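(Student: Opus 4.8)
The plan is to derive both parts of the corollary directly from the comparison result of Theorem~\ref{compprinciple} together with Proposition~\ref{propRN}(b) and assumption (A5).

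\textbf{Part (a).} Suppose $u_1$ and $u_2$ are two bounded viscosity solutions of \eqref{E}--\eqref{BC1}. By Definition~\ref{def_vsol}(iii), the usc envelope $u_1^*$ is a subsolution and the lsc envelope $u_2^{}{}_*$ is a supersolution; since $u_1,u_2$ are bounded, these envelopes are bounded above and below respectively, and both \eqref{integ}-type integrability conditions hold because $g\equiv 0$ forces the relevant growth control (the discussion at the end of Section~\ref{prelim}). Applying Theorem~\ref{compprinciple} with the pair $(u_1^*,u_2^{}{}_*)$ gives $u_1^*\le u_2^{}{}_*\le u_2$, hence $u_1\le u_1^*\le u_2$ in $\R^N$. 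Swapping the roles of $u_1$ and $u_2$ yields $u_2\le u_1$, so $u_1=u_2$; this proves (a).

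\textbf{Part (b).} Assume now also (A5), and let $u$ be any bounded viscosity solution of \eqref{E}--\eqref{BC1}. Set $M:=M_F/\lamb0$. I claim the constant function $w\equiv M$ is a (classical, hence viscosity) supersolution of \eqref{E}--\eqref{BC1}: for $x\in\Omega$ we have $Dw=D^2w=0$, $\I_\delta[w]=\I^\delta[w]=0$, so $\mathcal F[w,w](x)=F(x,M,0,0,0)$, and by (A2) (monotonicity in the zeroth-order variable) together with (A5), $F(x,M,0,0,0)\ge F(x,0,0,0,0)+\lamb0 M\ge -M_F+\lamb0\cdot(M_F/\lamb0)=0$; for $x\in\domeg$ the inequality $\sup_{n\in N_\Omega(x)}Dw(x)\cdot n=0\ge 0$ handles the boundary clause, and for $x\in\omegb^c$, $Dw\cdot n=0\ge 0$. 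Symmetrically, $-M$ is a subsolution. Now apply Theorem~\ref{compprinciple} twice: with subsolution $u$ (or rather $u^*$) and supersolution $M$ we get $u\le M$; with subsolution $-M$ and supersolution $u$ (i.e. $u_*$) we get $-M\le u$. Hence $|u(x)|\le M_F/\lamb0$ for all $x\in\R^N$, which is \eqref{bnd_soln}. This proves (b).

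\textbf{Main obstacle.} The argument is essentially bookkeeping on top of Theorem~\ref{compprinciple}; the only genuine point requiring care is that the constant functions $\pm M$ qualify as viscosity sub/supersolutions \emph{in the extended sense of Definition~\ref{def_vsol}}, i.e. that they satisfy not just the equation in $\Omega$ but also the (trivially satisfied) boundary inequalities on $\domeg$ and on $\omegb^c$, and that they meet the integrability requirement \eqref{integ} (immediate, being bounded). One should also remark that Theorem~\ref{compprinciple} is stated for an usc subsolution bounded above against an lsc supersolution bounded below, so one must pass to semicontinuous envelopes when comparing two solutions in (a); this is standard and harmless since boundedness is preserved. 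No step here is expected to present real difficulty once Theorem~\ref{compprinciple} is in hand.
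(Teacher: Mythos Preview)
Your proof is correct and follows exactly the approach the paper indicates: the paper simply remarks (in the sentence immediately preceding the corollary) that uniqueness follows from Theorem~\ref{compprinciple} and that $\pm M_F/\lamb0$ are sub/supersolutions under (A5), without spelling out any further details. Your write-up is a faithful expansion of that one-line justification.
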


\begin{remark}
\label{alt_assumptions}
Under assumption (A3-1), the above results also holds if assumption
(A1) is replaced by the much more general assumption: 
\begin{itemize}
\item [(A1-2)] 
Assumption (\ref{Levy}) holds and there exists a constant $\bar{c}>0$ such that
\begin{equation*}
\int_{B}|j(x,z) - j(y,z)|^2
\mu(dz)\le {\bar c}|x-y|^2.
\end{equation*}
\end{itemize}
The proof in Section 5 in \cite{BI} can be modified easily to cover
this case by a clever trick which can be found e.g. in Section 6 in
\cite{JK}. Compared to assumption (A1), assumption (A1-2) allows
more general dependences of $j(x,z)$ in $x$. If we also relax
  \eqref{Levy} so that the constant $c(j)$ is finite only for compact
  subsets of $x\in\Omega$, then the above results also cover the case when
  $j$ has linear growth in $x$.
\end{remark}

 \begin{figure}[h!]
 \includegraphics[scale=1.00]{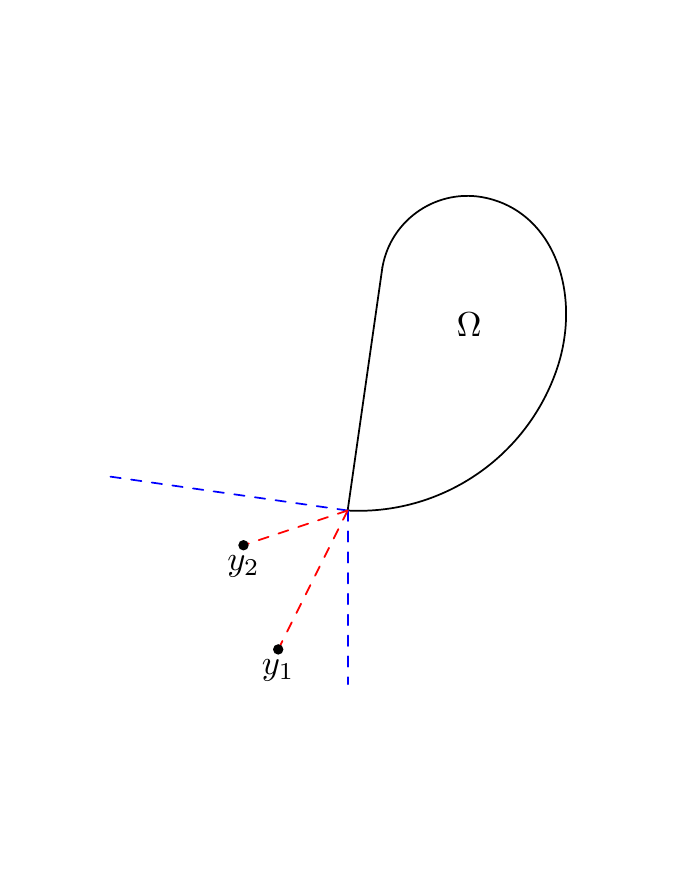}
 \caption{The curves of (BC1) with $\gamma=n$ starting at $y_1$ and
   $y_2$ and ending at a corner point of $\del\Omega$.}
 \end{figure}

\begin{proof}[Proof of Theorem \ref{compprinciple}] 
We introduce the following initial value problem (cf. (BC2)), 
\begin{equation}
\label{caracteristics}
 \dot X_y (t) = -n(X_y(t))\ \ \text{for}\ \ t>0,\qquad   X_y(0)=y.
\end{equation}
Note that the projection on the closed convex set $\omegb$,
$P:\Omega^c\ra\del\Omega$, is also given by 
$$P_y=X_y(\tau_y) \quad \hbox{for any  }y\in\omegb^c \; ,$$
where we recall that
$$\tau_y=\inf\{t>0:X_y(t)\in\domeg\}.$$ 
Since $\Omega$ is convex and $|n|=1$,
it follows that $\{X_y(\cdot)\}_y$ defines a family of constant speed,
finite length, and non-intersecting paths in $\omegb^c$ having the form
\begin{equation}
\label{straightline}
X_y(t)=y-tn(y)\quad\text{for}\quad t\in[0,\tau_y].
\end{equation}
Obviously $\tau_y<\infty$ for all $y$ so that (BC2) is  trivially
satisfied when $\gamma=n$.

We argue by contradiction assuming that 
$$\displaystyle{M:=\sup_{\R^N} \{u(x)-v(x)\}>0}.$$ 
Since $n$ satisfies (BC1'), Lemma~\ref{rem22} applies with $g\equiv0$ and we 
find that $u(x)-v(x)\leq u(P_x)-v(P_x)$ in $\omegb^c$, and hence that $\displaystyle{M =
  \max_{\omegb} \{u(x)-v(x)\}}$. 

Let $\chi : \R \to \R$ be a bounded smooth function such that $\chi(t)
\equiv 0$ for $t\leq 0$, $\chi'(t) >0$ for $t>0$, and $\chi(t) \geq
2(||u||_\infty + ||v||_\infty)$ for $t \geq 1$. 
We double the variables, introducing the function
\begin{align*}
\Psi(x,y) =u(x)-v(y)-\frac{|x-y|^2}
{\epsi^2}-\chi(\beta(|x-x_0|^2+1))-\chi(\beta(|y-x_0|^2+1)),
\end{align*} 
where $\eps,\beta>0$, and $x_0$ is any given point in $\Omega$. It is
easy to see that, for $\beta$ small enough,
$M_{\eps,\beta}=\max_{\R^{2N}} \Psi(x,y)$ exists 
and is attained at some point $(\bar x,\bar y)\in\R^N\times\R^N$ (that depends on $\eps$ and $\beta$).
The crucial and new step in the proof is to show that $(\bar x,\bar
y)\in \bar \Omega\times \bar \Omega$. If this was not the case, then
two applications of Lemma~\ref{rem22} yields that
\begin{equation}\label{ineq-uv}
u(\bar x)-v(\bar y)\leq u( P_{\bar x}) - v( P_{\bar y}).
\end{equation}
Moreover, since $\Omega$ is
convex and $x_0\in\Omega$,  
\begin{equation}\label{ineqconvex}
|\bar x-\bar y|\geq |P_{\bar x}-P_{\bar y}|,\quad
|\bx-x_0|\geq |P_{\bar x}-x_0|, \quad\text{and}\quad|\by-x_0|\geq |P_{\bar
  y}-x_0|,
  \end{equation}
 and then, for $\beta$ small enough, we have the contradiction
\begin{equation}\label{ineq-psi}
M_{\eps,\beta}=\Psi(\bar x,\bar y) < \Psi(P_{\bar x},P_{\bar y}).
\end{equation}

Since $\bar x,\bar y \in \omegb$, the rest of the proof follows
classical arguments.  
Assume $\bx\in \partial\Omega$ and let 
 $$\phi(x,y) = \frac{|x-y|^2}
 {\epsi^2}+\chi(\beta(|x-x_0|^2+1))+\chi(\beta(|y-x_0|^2+1)).$$ 
Note that 
by convexity of $\Omega$, 
$$(\bx-y)\cdot n\geq0\quad\text{for all}\quad y\in\omegb,\ n\in
N_\Omega(\bx).$$
Moreover, this inequality is strict if $y\in\Omega$. Finally, since
$\chi'(t) >0$ for $t>0$, we use the fact that $x_0\in\Omega$ to find that
\begin{align}\label{avoid}
\begin{split}
D_x\phi(\bx,\by) \cdot n &= \frac 2 {\epsi^2}(\bx-\by)\cdot
  n+ \chi' (\beta(|\bx-x_0|^2+1)) 2\beta(\bx-x_0)\cdot  n\\
&> 0\qquad\text{for all}\qquad n\in N_\Omega(\bx) \ \text{and}\ \beta>0.
\end{split}
\end{align}
Therefore, from Definition
\ref{def_vsol}, the equation has to hold at $\bx$,
i.e. $\mathcal F[u,\phi(\cdot,\by)](\bx)\leq 0$. A similar 
argument shows that $\mathcal F[v,-\phi(\bx,\cdot)](\by)\geq 0$ if
$\by\in\del\Omega$. 

Now we are in the situation that  $\bx,\by\in \omegb$ and that the
equation is satisfied at these points. The conclusion of the proof is
then exactly as for the $\R^N$ case, and we omit the standard
details. Under the present assumptions, essentially all the remaining
details can be found in Section 5 in \cite{BI}. But see also
\cite{BBP,JK,pha} for very similar results.   
 \end{proof}

\begin{remark} The key ingredients of the above proof are
\begin{itemize}
\item[(i)] Inequality \eqref{ineq-uv} that comes from
  Lemma~\ref{backtoboundary} or \ref{rem22} and that allow us to compare
  values of $u$ and $v$ outside $\omegb$ with those on inside $\omegb$.
\smallskip
\item[(ii)] Inequality \eqref{ineq-psi} that comes from convexity
  and contraction properties (see \eqref{ineqconvex}). In the above
  proof, the contraction property of the projection on the closed,
  convex set $\omegb$ was playing the key role (allowing us
  to use a very simple test function), but in general the contraction property
  comes from the control on the $X_y$ trajectories w.r.t. $y$.
\smallskip
\item[(iii)] As in the classical Neumann/oblique derivatives boundary
conditions cases, the test-function has to be build in order to allow
us to ``avoid'' the boundary condition (cf. \eqref{avoid}). 
\end{itemize}
These three ingredients are the same in any proof but with different
arguments to handle them. We are going to focus on these arguments. 
\end{remark}

%We
%Trivial to extend results to allow Bellman-Isaacs type of
%equations.... (write more)

\begin{remark}
 If $\Omega$ is bounded, we can relax assumptions (A3-1) and (A3-2) in
 the standard way and the comparison result will still hold. E.g. since we
 no longer need to prevent maximum points from escaping to 
infinity,  we can set all functions $r,s_1,s_2$ and $\omega$ equal
zero in (A3-1).
\end{remark}

\section{General oblique derivative conditions in non-convex smooth domains}\label{gen-obl}

In this section we consider the general oblique derivative
problem of the form \eqref{E}--\eqref{BC} on a bounded,
possibly non-convex , $C^2$-domain $\Omega$. Compared to section
\ref{sec:results}, the domain and  boundary condition and  are more
general, but the class of equations (see below) and the boundary
regularity are more restricted. 

 Assuming that \eqref{Levy} and (BC1) hold, and we now have the
  following definition of viscosity solutions

\begin{definition}
\label{def_vsol2}
(i) A locally bounded, usc function $u:\R^N\to \R$ is a {\em viscosity subsolution} of
\eqref{E}--\eqref{BC} if it satisfies \eqref{integ}, and for any test
function $\phi\in C^2(\R^N)$ and for any maximum point $x_0\in\R^N$ of
$u-\phi$ in $B_{c(j)\delta}(x_0)$ where $c(j)$ is defined in \eqref{Levy},
\begin{align*}
\begin{cases}
\mathcal{F}
[u,\phi]
(x_0) \le 0& \hbox{if}\quad
x_0\in \Omega,\\[0.2cm]
\displaystyle\min\bigg(\mathcal{F}
[u,\phi]
(x_0), D\phi(x_0)\cdot \gamma(x_0)-g(x_0)\bigg)\le 0 &\hbox{if}\quad
x_0\in \partial \Omega,\\[0.2cm]
D\phi(x_0)\cdot \gamma(x_0)\le g(x_0) & \hbox{if}\quad x_0\in \omegb^c
\end{cases}
\end{align*}
(ii) A locally bounded, lsc function $v:\R^N\to \R$  is a {\em viscosity supersolution} of
\eqref{E}--\eqref{BC} if  it satisfies \eqref{integ}, and for any
test function $\phi\in C^2(\R^N)$ and for any minimum point $x_0\in
\R^N$ of the function $u-\phi$ in  $B_{c(j)\delta}(x_0)$ where $c(j)$
is defined in \eqref{Levy},
\begin{align*}
\begin{cases}
\mathcal{F}
[u,\phi]
(x_0) \ge 0& \hbox{if}\quad
x_0\in \Omega,\\[0.2cm]
\displaystyle \max\bigg(\mathcal{F}
[u,\phi]
(x_0),D\phi(x_0)\cdot \gamma(x_0)-g(x_0)\bigg)\ge 0 &\hbox{if}\quad
x_0\in \partial \Omega,\\[0.2cm]
D\phi(x_0)\cdot \gamma(x_0)\ge g(x_0) & \hbox{if}\quad x_0\in \omegb^c
\end{cases}
\end{align*}
(iii)  A {\em viscosity solution} $u$ of \eqref{E}--\eqref{BC} is a
locally bounded function whose upper and lower semicontinuous envelopes are
respectively sub- and supersolution of the problem.
\end{definition}

To handle non-convex domains and more general boundary conditions, we
will use a  rather complicated test-function which is no longer only a
function of $x-y$ plus small terms. For the proofs to work out we
therefore need to replace assumption (A3-1) and (A3-2) by a more
restrictive assumption similar to the one used in the local case
\cite{ba2}
\medskip

\begin{itemize}
\item [(A3-3)] For any $R,K>0$, there exist moduli of continuity
  $m_{R,K}$ such that, for any $x,y \in \omegb $, $|u|\le R$, $p,q\in
  \R^N$, $l\in\R$, and matrices $X, Y \in \SN$ satisfying
\begin{equation*}
\barre x-y \barre \leq  \eta \varepsilon,\qquad
%\label{propxy}
\barre p-q \barre \leq K \eta \varepsilon (1 + \barre p \barre \wedge
\barre q 
\barre), \quad\text{and}
%\label{propgrad}
\end{equation*}
\begin{equation*}%\label{inegmatthbis}
-\frac{K}{\varepsilon^2 } Id
\leq \left(\begin{array}{cc} X & 0 \\ 0 & -Y
\end{array}\right) \leq \frac{K}{\varepsilon^2} 
\left(\begin{array}{cc} Id
& -Id \\ -Id & Id
\end{array}\right) +  K\eta Id ,
\end{equation*}
we have that
$$ F(y,u,q,Y,l)-F(x,u,p,X,l)\leq m_{R,K}
 \left(\eta + |x-y|(1+\barre p \barre \vee \barre q\barre)
+\frac{|x-y|^2}{\varepsilon^2}\right).$$
\end{itemize}

We have the following comparison result.
\begin{theorem}[Comparison II]
\label{compprinciple2}
Assume (A1), (A2), (A3-3), (A4), (BC1), and (BC2) hold. If 
$u$ and $v$ are respectively a locally bounded usc
subsolution and a locally  bounded lsc supersolution  of
\eqref{E}--\eqref{BC}, then $u\le v$ in $\R^N$. 
\end{theorem}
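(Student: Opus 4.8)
The strategy is to graft onto the nonlocal setting the comparison argument of G.~Barles \cite{ba2} for the \emph{local} oblique derivative problem, using Lemma~\ref{backtoboundary} to dispose of the exterior condition and the machinery of \cite{BI,BCI} for the integral operator. Suppose, for a contradiction, that $M:=\sup_{\R^N}(u-v)>0$. Since (BC1) implies (BC1') and (BC2) holds, Lemma~\ref{backtoboundary}, applied to $u$ and to $v$ with $t=\tau_y$, gives for every $y\in\omegb^c$
\[
u(y)-v(y)\;\le\; u\big(X_y(\tau_y)\big)-v\big(X_y(\tau_y)\big),
\]
the two integrals of $g$ cancelling and $X_y(\tau_y)\in\domeg$. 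As $\omegb$ is compact and $u-v$ is usc, it follows that $u-v$ is bounded above on $\R^N$ and that $M=\max_{\omegb}(u-v)$ is attained at some $\bar z\in\omegb$; in particular $u,v$ are bounded near $\bar z$, so the moduli in (A2)--(A4), (A3-3) may be used with a fixed radius $R$.

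I would then double variables with the test function $\psi_\eps$ built in \cite{ba2} for the bounded $C^2$ domain $\Omega$ and the field $\gamma$, maximizing $\Psi(x,y)=u(x)-v(y)-\psi_\eps(x,y)-\ell(x,y)$ over $\R^N\times\R^N$, where $\ell$ is a bounded $C^2$ localization term vanishing on $\omegb\times\omegb$, chosen (exactly as in the proof of Theorem~\ref{compprinciple}) so that the maximum exists, is attained at some $(\bar x,\bar y)$, and --- flowing any component of $(\bar x,\bar y)$ lying in $\omegb^c$ back to $\domeg$ along the trajectories $X_{\cdot}$ and invoking Lemma~\ref{backtoboundary} together with the contraction/control properties of these trajectories (cf. the remark following Theorem~\ref{compprinciple}) --- satisfies $(\bar x,\bar y)\in\omegb\times\omegb$. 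By the standard arguments $\bar x,\bar y\to\bar z$, $u(\bar x)\to u(\bar z)$, $v(\bar y)\to v(\bar z)$, and $\psi_\eps(\bar x,\bar y),|\bar x-\bar y|/\eps\to0$ as $\eps\to0$. The two features of $\psi_\eps$ I would invoke are: (i) at the maximum, its gradients $p_\eps:=D_x\psi_\eps(\bar x,\bar y)$, $q_\eps:=-D_y\psi_\eps(\bar x,\bar y)$ and its Hessian satisfy precisely the bounds $|\bar x-\bar y|\le\eta\eps$, $|p_\eps-q_\eps|\le K\eta\eps(1+|p_\eps|\wedge|q_\eps|)$ and the two-sided matrix inequality appearing in (A3-3), with moreover $|p_\eps|,|q_\eps|=O(|\bar x-\bar y|/\eps^2)$; and (ii) the boundary condition is \emph{avoided}: if $\bar x\in\domeg$ then $D_x\psi_\eps(\bar x,\bar y)\cdot\gamma(\bar x)>g(\bar x)$, while if $\bar y\in\domeg$ then $-D_y\psi_\eps(\bar x,\bar y)\cdot\gamma(\bar y)<g(\bar y)$. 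By (ii) and Definition~\ref{def_vsol2}, the PIDE inequalities --- not merely the boundary inequalities --- hold at both $\bar x$ and $\bar y$, whether these points lie in $\Omega$ or on $\domeg$.

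From here the argument follows the nonlocal routine of \cite{BI}. Passing to the sub/super-jets and applying the Jensen--Ishii lemma adapted to PIDEs produces $X,Y\in\SN$ satisfying the matrix inequality of (A3-3) up to an error absorbable into its $K\eta$ term; and, using that $(\bar x,\bar y)$ is a global maximum of $\Psi$ together with the smoothness and growth of $\psi_\eps$ and assumption (A1), the difference $l_u-l_v$ of the two nonlocal arguments is estimated by $o_\delta(1)+O(|\bar x-\bar y|^2/\eps^2)$, exactly as in \cite[Section~5]{BI} (see also \cite{BCI}), and enters the final bound, through (A4), only as $C|l_u-l_v|$. Subtracting the two PIDE inequalities, using (A2) (licit since $u(\bar x)-v(\bar y)\to M>0$) to produce the term $\lamb0(u(\bar x)-v(\bar y))$ and (A4) to replace $l_u$ by $l_v$, one gets
\[
\lamb0\big(u(\bar x)-v(\bar y)\big)\;\le\; F\big(\bar y,v(\bar y),q_\eps,Y,l_v\big)-F\big(\bar x,v(\bar y),p_\eps,X,l_v\big)+C|l_u-l_v|.
\]
Now (A3-3) applies --- its hypotheses are exactly property (i) --- so the right-hand side is bounded by $m_{R,K}\big(\eta+|\bar x-\bar y|(1+|p_\eps|\vee|q_\eps|)+|\bar x-\bar y|^2/\eps^2\big)+C|l_u-l_v|$, and since $|\bar x-\bar y|\le\eta\eps$ forces $|\bar x-\bar y|(1+|p_\eps|\vee|q_\eps|)+|\bar x-\bar y|^2/\eps^2=O(\eta+\eta^2)$, this is $\le m_{R,K}(O(\eta))+C|l_u-l_v|$. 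Letting first the Jensen--Ishii parameter and $\delta$ tend to $0$, then $\eps\to0$, and finally $\eta\to0$ yields $\lamb0 M\le0$, a contradiction; hence $u\le v$ on $\R^N$.

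The main obstacle is the interaction between the elaborate test function $\psi_\eps$ of \cite{ba2} and the nonlocal terms (and the localization): one must check that, at the doubling maxima, the first and especially the second derivatives of $\psi_\eps$ --- which now carry the geometric corrections encoding the non-convexity of $\Omega$ and the obliqueness of $\gamma$, rather than only the quadratic $|x-y|^2/\eps^2$ --- remain controlled well enough for the increments $\I_\delta[\psi_\eps](\bar x)$, $\I_\delta[-\psi_\eps(\bar x,\cdot)](\bar y)$ to be dominated via (A1), and that the localization term $\ell$ can simultaneously be $C^2$, vanish on $\omegb\times\omegb$, force $(\bar x,\bar y)$ into $\omegb\times\omegb$ through Lemma~\ref{backtoboundary}, and leave the nonlocal estimates intact. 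Once the integral terms have been peeled off, the boundary-avoidance and the verification of the hypotheses of (A3-3) are taken over from \cite{ba2}.
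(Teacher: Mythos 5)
Your overall architecture matches the paper's: reduce to $M=\max_{\omegb}(u-v)$ via Lemma~\ref{backtoboundary}, double variables with the test function of \cite{ba2} plus a localization vanishing on $\omegb$, avoid the boundary condition via \eqref{passssln}--\eqref{passursln}, and finish with the nonlocal routine of \cite{BI}. However, there is a genuine gap at the one step that is actually new in this theorem: forcing the doubled maximum point $(\bar x,\bar y)$ into $\omegb\times\omegb$. You defer this to ``the contraction/control properties of these trajectories (cf. the remark following Theorem~\ref{compprinciple})'', but that remark explicitly warns that the projection-contraction mechanism is special to the convex case. Here $\Omega$ is non-convex and $\gamma$ is oblique, so the flow is \emph{expansive}: Gr\"onwall gives $|X_x(t)-X_y(t)|\le e^{Lt}|x-y|$, and there is no contraction to invoke. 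Moreover, since $\bar x\ne\bar y$, the two $g$-integrals produced by Lemma~\ref{backtoboundary} do \emph{not} cancel (they cancel only in your first step, where both are taken along the same trajectory); their difference is of size $\tau_{\bar y}L_g e^{L_\gamma\tau_{\bar y}}|\bar x-\bar y|$, and when $\tau_{\bar x}\ne\tau_{\bar y}$ there is an additional uncompensated piece $\int_{\tau_{\bar y}}^{\tau_{\bar x}}g(X_{\bar x})$.

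The paper's resolution is Lemma~\ref{kl2}: the test function $\pse$ itself decreases strictly along the coupled flow, $\pse(X_{\bar x}(\tau),X_{\bar y}(\tau))\le\pse(\bar x,\bar y)-\tilde K\tau\eta\eps$ with $\tilde K$ as large as desired, and the leftover stretch of trajectory is absorbed by parts (b)--(c). This strict decay is not a soft consequence of the \cite{ba2} construction; it rests on the terms $e^{-K_1[d(x)+d(y)]}$ and $-K_2\eta\eps[d(x)+d(y)]$ in \eqref{tfunc}, on the new estimate \eqref{gammacontrol}, and on the explicit tracking of the $K_1,K_2$-dependence in \eqref{estp1} so that these constants can be chosen after $\eps,\eta$ to dominate both the Gr\"onwall expansion and the $g$-integral error. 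Without an argument of this type your contradiction $\Phi(\bar x,\bar y)<\Phi(X_{\bar x}(\tau_{\bar x}),X_{\bar y}(\tau_{\bar y}))$ is unsubstantiated, and the rest of the proof (boundary avoidance, (A3-3), the nonlocal estimates) never gets off the ground. The remainder of your plan, including the identification of the obstacle concerning $\I_\delta[\pse]$ and the use of (A1), (A2), (A4), is consistent with the paper.
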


This result will be proved in the subsections below. We start by
introducing the test function we need for the proof.

\subsection{The test-function}\label{testfunc}
\label{tesfunc}

As for local oblique derivative boundary conditions (see
e.g. \cite{ba2} and references therein), the proof of our comparison
result requires  a rather complicated 
test-function. Fortunately there are no major differences 
between the test-function for the local and nonlocal cases, and we now
recall a few facts about the test-function of \cite{ba2} and describe
the adaptations we need to make here. 

We start by changing our definition of the ``distance to the
boundary'' $d$. Now $d$ will be a bounded $C^2$ function which is
equal to the signed distance function 
to $\domeg$  in a
neighborhood of $\domeg$ ($d>0$ in $\Omega$ and  $d<0$ in
$\omegb^c$) and where $n(x):=-Dd(x)\neq0$ in $\Omega^c$. Note
that $n(x)$ is the outward unit normal vector to $\domeg$ for any $x
\in \domeg$.  
The test-function $\pse\in C^2(\R^{2N})$ of \cite{ba2} can then be defined as follows,
\begin{align}
\label{tfunc}
\pse (x,y) = &\ e^{- K_{1} [d(x) + d(y)]}\,
{{|x-y|^2}\over{\varepsilon^2}}\nonumber\\
&-C_{\eta \varepsilon} \Big(\frac{x+y}{2},
e^{- K_{1} [d(x) + d(y)]}\,
{{2(x-y)}\over{\varepsilon^2}}
\Big) 
\big(d(x)-d(y)\big)\\
&+  e^{- K_{1} [d(x) + d(y)]}\,
{{A\big(d(x)-d(y)\big)^2}\over{\varepsilon^2}}-  K_{2} \eta
\varepsilon \,[d(x) + d(y)] \; , \nonumber
\end{align}
for parameters $\eta, \varepsilon >0$ (small), constants $A,
K_{1},  K_{2}$  (large), and where the function
$C_{\eta \varepsilon}$ (see \cite{ba2} page 214) is a suitable smooth
approximation of a bounded 
Lipschitz extension of the solution $t=C(x,p)$ of the equation  
$$ \gamma(x)\cdot(p+tn(x)) - g(x) = 0\qquad\text{for}\qquad p\in\R^N,\ x \text{ near }\del\Omega.$$

The key properties of the test-function are given in the Lemma below.

\begin{lemma}\label{ftch} Assume (BC1) and let 
    $R>0$. If
  $\eta,\varepsilon>0$ are small  
enough, then for $A, K_1,K_2$ large enough, then the function $\pse$
defined in \eqref{tfunc} has the following properties 

(i) For any $x,y\in \R^N$, 
\begin{equation}\label{pos}
\pse(x,y)\geq  K^{-1}{{|x-y|^2}\over{\varepsilon^2}}-K\eps^2-  K_{2} \eta\varepsilon \,[d(x) + d(y)].
%\leq
%K_R{{|x-y|^2}\over{\varepsilon^2}} +K_R (\eta \varepsilon+\eps^2).
% In $\Omega^c$, $d<0$ and hence the $K_2$-term can be dropped in the
% lower estimate here. The C-term give estimates without growth. 
\end{equation}

(ii) For $\eps,\eta\in(0,1)$ and $\norm{x-y}\leq \eta 
\varepsilon$, 
\begin{align}
&\norm{D_{x} 
\pse(x,y)}+\norm{D_{y} \pse(x,y)}\geq -K +
K^{-1}{{|x-y|}\over{\varepsilon^2}},\nonumber\\
& \norm{D_{x} 
\pse(x,y)}+\norm{D_{y} \pse(x,y)}  \leq \nonumber\\
&\quad C
{{|x-y|}\over{\varepsilon^2}} + C\Big(1 +\eta^2K_1 e^{2K_1 \|d\|_\infty}+\eps\eta K_2\Big),\label{estp1}\\
&\norm{D_{x} 
\pse(x,y) + D_{y} \pse(x,y)}\leq K %(1+K_1)
{{|x-y|^2}\over{\varepsilon^2}}+ K %(1+K_2)
(\eta \varepsilon + \varepsilon^2), \text{ and} 
\nonumber\\
&\frac{K}{\varepsilon^2 } Id \leq D^2 \pse(x,y) \leq 
 \frac{K}{\varepsilon^2} \left(\begin{array}{cc} Id 
& -Id \\ -Id & Id 
\end{array}\right) + K\eta Id.
\end{align}

(iii) There is $\delta >0$ such that for  $\norm{x-y} \leq \delta$ and $x,y$ in a neighborhood of $\domeg$,
\begin{align}\label{passssln}
\gamma(x)\cdot D_{x} \pse(x,y))
& > g(x) \quad \hbox{if } d(x) \leq d(y) \; ,\\ 
\label{passursln}
- \gamma(y)\cdot D_{y} \pse(x,y)) &< g(y) \quad \hbox{if } d(y) \leq
d(x) \;,
\end{align}

and if in addition $|x-y|\leq \eta\eps$, then
\begin{align}\label{gammacontrol}
\begin{split}
&- \gamma(x)\cdot \big (D_{x} \pse(x,y) + D_{y} \pse(x,y)\big )\\
&\le -K_1\frac{\nu}{4}
e^{- K_{1} [d(x) + d(y)]}{{|x-y|^2}\over{\varepsilon^2}}- K_2\frac{\nu}{4}
\eta\varepsilon.
\end{split}
\end{align}
\end{lemma}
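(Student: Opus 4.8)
\textbf{Proof plan for Lemma~\ref{ftch}.}
The plan is to verify the three groups of properties essentially by direct computation, following the structure of the corresponding lemma in \cite{ba2}, and keeping track of how the nonlocal setting (which here only affects the hypotheses through (BC1), not the test-function itself) leaves the local computations untouched. First I would record the elementary derivative formulas for $\pse$: writing $E := e^{-K_1[d(x)+d(y)]}$, $r := |x-y|^2/\eps^2$, $\rho := (d(x)-d(y))$, and noting that $Dd$ is bounded with bounded $D^2 d$ (since $d\in C^2$ and $d$ is constant-distance near $\domeg$), one gets $D_x\pse = E\,\frac{2(x-y)}{\eps^2} + (\text{terms carrying a factor } Dd(x)) + O(\eps\eta)$, and similarly for $D_y$. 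The crucial structural observation, exactly as in \cite{ba2}, is that in $D_x\pse + D_y\pse$ the leading $\frac{2(x-y)}{\eps^2}$ contributions cancel, leaving only terms proportional to $r$, to $\rho^2/\eps^2$, and to $\eta\eps$; this is what makes \eqref{estp1} (third line) and \eqref{gammacontrol} possible.

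For part (i), inequality \eqref{pos} follows by bounding the $C_{\eta\eps}$-term: since $C_{\eta\eps}$ is a smooth bounded approximation of a bounded Lipschitz function, the product $C_{\eta\eps}(\cdot,\cdot)\,\rho$ is $O(\eps^2)$ after a weighted Young inequality absorbing a small multiple of $r$, while the $A\rho^2/\eps^2$ term is nonnegative and $E$ is bounded below by $e^{-2K_1\|d\|_\infty}>0$; choosing $K$ accordingly yields \eqref{pos}. For part (ii), the gradient estimates come from the formulas above together with $|x-y|\le\eta\eps$: the lower bound $-K + K^{-1}|x-y|/\eps^2$ uses that on $\{|x-y|\le\eta\eps\}$ the $\frac{2(x-y)}{\eps^2}$ term dominates once $\eta,\eps$ are small and $\gamma\cdot n\ge\nu$ keeps the $C_{\eta\eps}$ correction from cancelling it; the upper bound is immediate from the triangle inequality; the Hessian bounds $\frac{K}{\eps^2}Id \le D^2\pse \le \frac{K}{\eps^2}\binom{Id\ -Id}{-Id\ Id} + K\eta Id$ follow because $D^2$ of the quadratic part is exactly $\frac{2E}{\eps^2}\binom{Id\ -Id}{-Id\ Id}$ up to lower-order terms controlled by $K_1, A, K_2$ and the $C^2$-bounds on $d$, which contribute only $O(\eta/\eps^2 \cdot |x-y|) = O(\eta^2/\eps)$ and $O(\eta)$ pieces.

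For part (iii), the inequalities \eqref{passssln}--\eqref{passursln} are the whole point of the construction of $C_{\eta\eps}$: by design $C(x,p)$ solves $\gamma(x)\cdot(p + C(x,p)n(x)) = g(x)$, so $\gamma(x)\cdot D_x\pse(x,y)$ differs from $g(x)$ by a term that, when $d(x)\le d(y)$, has a definite sign coming from the $-K_2\eta\eps\,d(x)$ penalization and the $\gamma\cdot n\ge\nu$ condition, giving strict inequality for $|x-y|\le\delta$ in a neighborhood of $\domeg$; the supersolution version \eqref{passursln} is symmetric. Finally \eqref{gammacontrol} follows by computing $\gamma(x)\cdot(D_x\pse + D_y\pse)$ using the cancellation noted above: the surviving terms are $-K_1 E\,r\,\gamma(x)\cdot(Dd(x)+Dd(y))$ plus $-K_2\eta\eps\,\gamma(x)\cdot(Dd(x)+Dd(y))$ plus $O(r) + O(\eta\eps)$ cross-terms; since $x,y$ are near $\domeg$ and $|x-y|\le\eta\eps$, continuity of $Dd$ and $\gamma\cdot n\ge\nu$ give $\gamma(x)\cdot(Dd(x)+Dd(y)) \ge \nu$ for $\eta,\eps$ small, and choosing $K_1, K_2$ large enough to dominate the $O(r)$ and $O(\eta\eps)$ terms produces the stated bound with the factors $\nu/4$.

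I expect the main obstacle to be the bookkeeping in \eqref{gammacontrol} and in the lower bound of \eqref{estp1}: one must choose the parameters in the right order — first $\eta,\eps$ small enough that all continuity moduli (of $Dd$, of $\gamma$, of $C_{\eta\eps}$) are controlled and $\gamma\cdot(Dd(x)+Dd(y))\ge\nu$ holds near $\domeg$, then $A$, then $K_1$, then $K_2$ large — so that each absorption step is legitimate and no circularity creeps in. The nonlocal aspect contributes nothing new here beyond requiring $\pse\in C^2$, which it is; the real work is verbatim the local computation of \cite{ba2}, so I would cite that reference for the detailed constants and only spell out the cancellation structure and the parameter ordering.
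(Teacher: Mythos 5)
Your plan follows essentially the same route as the paper's proof: write $\pse$ in terms of the variables $x-y$, $d(x)-d(y)$, $\tfrac{x+y}{2}$, $d(x)+d(y)$, exploit the cancellation of the leading $2(x-y)/\eps^2$ contributions in $D_x\pse+D_y\pse$, use the bounds $|C_{\eta\eps}|+|D_xC_{\eta\eps}|\le C(1+|p|)$ and $|D_pC_{\eta\eps}|\le C$ together with Cauchy--Schwarz (with $A$ large absorbing the cross term) and the stated parameter ordering, and cite \cite{ba2} for the estimates already established there --- which is exactly what the paper does, writing out only \eqref{estp1} and \eqref{gammacontrol}. The one slip to correct is a sign convention in your treatment of \eqref{gammacontrol}: in Section~\ref{gen-obl} the paper sets $n=-Dd$ (with $d<0$ in $\omegb^c$), so the factor multiplying the good negative $K_1$- and $K_2$-terms is $\gamma(x)\cdot\bigl(n(x)+n(y)\bigr)\ge\nu$ near $\domeg$, not $\gamma(x)\cdot\bigl(Dd(x)+Dd(y)\bigr)\ge\nu$.
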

Except for \eqref{gammacontrol}, these estimates
have essentially been proved in Section 5 in \cite{ba2}. Some new features that 
only marginally changes the proofs are: (i) $x,y$ can now belong to
$\Omega^c$, (ii) inequality \eqref{pos} is slightly more accurate, and (iii) inequalities \eqref{passssln} and \eqref{passursln} are now
given in a neighborhood and not only at  $\del\Omega$.
Moreover, the constants $K$ will in general depend on
$K_1$ and $K_2$, and the
precise dependence is not important except for the term
\eqref{estp1}. The importance of this dependence is both new and
central to this paper (cf. the proof of Lemma \ref{kl2} a)). We will 
therefore prove both \eqref{estp1} and \eqref{gammacontrol} here.

\begin{proof}[Proof of \eqref{estp1} and \eqref{gammacontrol}]
To simplify the computations, we write $\pse$ in the following way
$$ \pse(x,y) = \chi(x-y, d(x)-d(y),\frac{x+y}{2},d(x)+d(y))\; ,$$
where 
\begin{align*} 
&\chi(X,Y,Z,T):= e^{- K_{1} T}
{{X^2}\over{\varepsilon^2}} -C_{\eta \varepsilon} \left (Z,
e^{- K_{1} T}
{{2X}\over{\varepsilon^2}}
\right) 
Y
+e^{- K_{1} T}  {{AY^2}\over{\varepsilon^2}}-  K_{2} \eta \varepsilon T \; .
\end{align*}
In this notation,
\begin{align*}
&D_x \pse (x,y)= \chi_X-\chi_Y n(x) + \frac12\chi_Z -
\chi_Tn(x),\\
&D_x \pse (x,y)+ D_y \pse (x,y) = -\chi_Y (n(x)-n(y)) + \chi_Z -
\chi_T(n(x)+n(y)).
\end{align*}
By the assumptions on $\gamma$ and $g$ and the construction
of $C_{\eta\eps}=C_{\eta\eps}(x,p)$ in \cite{ba2}, there is a $C>0$ such that
$$|C_{\eps\eta}|+|D_xC_{\eps\eta}|\leq C(1+|p|)\qquad\text{and}\qquad
|D_pC_{\eps\eta}|\leq C. $$
Hence there are constants $C_1$ and
$C_2$  such that 
\begin{align*}
|\chi_X|+|\chi_Y|&\leq C_1 + C_2 e^{- K_{1} T}
\Big({{2|X|}\over{\varepsilon^2}} + {{2(1+A)|Y|}\over{\varepsilon^2}} \Big),\\
|\chi_Z| &\leq \Big(C_1 + C_2e^{- K_{1} T} 
{{2|X|}\over{\varepsilon^2}}\Big)|Y|,\\
  |\chi_T| &\leq K_1 e^{- K_{1} T}C_2\Big({{X^2}\over{\varepsilon^2}} + {{(1+A)Y^2}\over{\varepsilon^2}}\Big)+ K_2\eta\varepsilon.
\end{align*}
Since $|X|,|Y|\leq C|x-y|$, estimate \eqref{estp1} now follows.

To prove \eqref{gammacontrol}, we note that by using Cauchy-Schwarz inequality on the $D_pC_{\eps\eta}$-term and taking $A$
large enough,
\begin{align*}
 \chi_T & = -K_{1} e^{- K_{1} T} \left[
{{X^2}\over{\varepsilon^2}} - D_p C_{\eta \varepsilon} \cdot {{2X}\over{\varepsilon^2}}Y
+ {{AY^2}\over{\varepsilon^2}} \right ]
-  K_{2} \eta \varepsilon \\
&\leq-\frac{K_1}{2} e^{- K_{1}
  T}\Big({{X^2}\over{\varepsilon^2}} +
{{AY^2}\over{\varepsilon^2}}\Big)- K_2\eta\varepsilon.
\end{align*}
Let $\mathcal{W}=\{x:\dist(x,\del\Omega)<r\}$, and let $r>0$
be so small that $\gamma\cdot n\geq\frac\nu2$ in $\mathcal{W}$. Such a
set exists by (BC1) and continuity of $\gamma$ and $n$. After an
easy computation based on the above
estimates, the Lipschitz continuity of $n$ ($|n(x)-n(y)|\sim|X|$), the
inequality $\gamma\cdot n\geq\frac\nu2$, 
Cauchy-Schwarz inequality, and finally, taking $K_{1},
K_{2}$ large enough  so that the $\chi_T$-term
dominates, we conclude that \eqref{gammacontrol} 
holds in  $\mathcal{W}$.
\end{proof}

The next lemma plays a key role in the comparison proof.

\begin{lemma}\label{kl2} Assume (BC1) and (BC2), let $\tau_x$ be
  defined in Lemma \ref{backtoboundary}, and $\tau:=\min(\tau_x, \tau_y)$.
\medskip

\noindent (a) For any $\tilde K\geq0$, there are constants
$K_1, K_2$ large enough, such that for any $\eps,\eta>0$ small enough, if
$x,y\in\Omega^c$ are close enough to $\domeg$ and $\norm{x-y}\leq \eta  
\varepsilon/2$, then 
\begin{align} 
\label{g1}
\pse (X_x(\tau),X_y(\tau)) & \leq \pse (x,y) - \tilde K \tau \eta
\eps.
\end{align}

\noindent (b) For any $\eta >0$, there are constants
$K_1, K_2$ large enough, such that for any $\eps>0$, if
$x,y\in\Omega^c$ are close enough to $\domeg$ and $\tau_y\leq\tau_x$,
then  
\begin{align} 
\label{gtransportint}
\pse (X_x(\tau_x),X_y(\tau))& \le \pse (X_x(\tau),X_y(\tau)) -
\int_{\tau_y}^{\tau_x} g(X_x(t)) dt.
\end{align}

\noindent (c) For any $\eta >0$, there are constants
$K_1, K_2$ large enough, such that for any $\eps>0$, if
$x\in\Omega^c$ and $y\in\omegb$ are close enough to $\domeg$, then 
\begin{align*} 
\pse (X_x(\tau_x),y)& \le \pse (x,y) -
\int_{0}^{\tau_x} g(X_x(t)) dt.
\end{align*}
\end{lemma}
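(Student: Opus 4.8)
The plan is to handle all three parts by the same device: express the left-hand side as the value at a later time of the $C^1$ function $t\mapsto \pse(X_x(t),X_y(t))$ (with one argument frozen in (b) and (c)), differentiate along the characteristics using $\dot X=-\gamma(X)$, and integrate. Since $\pse\in C^2(\R^{2N})$ and the trajectories are $C^1$ (Lipschitz $\gamma$ from (BC1) gives $X\in C^1$), this is legitimate. First I would record the geometric preliminaries that make it useful. If $x,y$ are close enough to $\domeg$, then $\frac{d}{dt}d(X_x(t))=n(X_x(t))\cdot\gamma(X_x(t))\ge \nu/2>0$ in a fixed neighbourhood $\W$ of $\domeg$ (by (BC1) and continuity), so $d(X_x(\cdot))$ increases monotonically to $0$, the trajectories remain in $\W$, and $\tau_x,\tau_y$ are small (of order the distance to $\domeg$); moreover $\frac{d}{dt}|X_x(t)-X_y(t)|^2\le 2L_\gamma|X_x(t)-X_y(t)|^2$, so $|X_x(t)-X_y(t)|\le e^{L_\gamma t}|x-y|$, which keeps $|X_x(t)-X_y(t)|\le \eta\eps$ whenever $|x-y|\le\eta\eps/2$ and $\tau$ is small. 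Hence the estimates of Lemma \ref{ftch} apply along the whole relevant path. Writing $h(t):=\pse(X_x(t),X_y(t))$ we have $\dot h(t)=-\gamma(X_x(t))\cdot D_x\pse-\gamma(X_y(t))\cdot D_y\pse$, gradients evaluated at $(X_x(t),X_y(t))$.

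For (a), where $g$ plays no role, I would split
$$\dot h(t)=-\gamma(X_x(t))\cdot\big(D_x\pse+D_y\pse\big)\;-\;\big(\gamma(X_y(t))-\gamma(X_x(t))\big)\cdot D_y\pse,$$
bound the first term by \eqref{gammacontrol} and the second by $L_\gamma|X_x(t)-X_y(t)|$ times the gradient bound \eqref{estp1}. The crucial point is that the constant $C$ in \eqref{estp1} is \emph{independent of $K_1,K_2$}; combined with $d\le0$ on $\ol{\Omega^c}$ (so $e^{-K_1[d(X_x)+d(X_y)]}\ge1$), this lets one first pick $K_1$ large, depending only on $\nu,L_\gamma,C$, so that the two $|X_x-X_y|^2/\eps^2$ contributions (one from \eqref{gammacontrol}, one from the Lipschitz term) combine to something nonpositive; then pick $K_2$ large, depending on $\tilde K$, so that $-K_2\nu/4+L_\gamma C\le -\tilde K-1$; and finally take $\eta,\eps$ small so that the remaining pieces — of the form $\eta\eps\,(L_\gamma C\,\eta^2K_1e^{2K_1\|d\|_\infty}+L_\gamma C\,\eps\eta K_2)$ after using $|X_x-X_y|\le\eta\eps$ — are $\le\eta\eps$. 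This yields $\dot h(t)\le -\tilde K\eta\eps$ on $[0,\tau]$, and integrating gives \eqref{g1}.

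For (b) and (c) no derivative estimate is needed, only the ``boundary condition is avoided'' inequalities \eqref{passssln}--\eqref{passursln}, which hold once $K_1,K_2$ are large enough for Lemma \ref{ftch}. In (c) I would freeze the second argument at $y\in\omegb$ and set $\psi(t):=\pse(X_x(t),y)$ for $t\in[0,\tau_x]$; since $X_x(t)\in\ol{\Omega^c}$ we have $d(X_x(t))\le0\le d(y)$, so \eqref{passssln} applies with $x\leftarrow X_x(t)$ and gives $\gamma(X_x(t))\cdot D_x\pse(X_x(t),y)>g(X_x(t))$, hence $\dot\psi(t)=-\gamma(X_x(t))\cdot D_x\pse(X_x(t),y)\le -g(X_x(t))$; integrating over $[0,\tau_x]$ gives the claim. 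Part (b) is the identical computation on $[\tau_y,\tau_x]$ (note $\tau=\tau_y$ since $\tau_y\le\tau_x$) with the second argument frozen at $X_y(\tau_y)\in\domeg$, so again the signed distance of the second argument dominates that of the first.

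The main obstacle is the bookkeeping in (a): one must respect the order of quantifiers — $K_1,K_2$ are chosen first and must not depend on $\eta,\eps$ — and must check that every term produced by \eqref{gammacontrol} and \eqref{estp1} carries exactly the $\eta,\eps,K_1,K_2$-dependence needed for the above cascade to close. This works precisely because the constant in \eqref{estp1} is independent of $K_1,K_2$, the point flagged in the remarks after Lemma \ref{ftch}. The preliminary geometric facts (trajectories stay in $\W$, $\tau$ small, $|X_x-X_y|\le\eta\eps$) needed to apply Lemma \ref{ftch} along the whole path follow from (BC1)--(BC2) and the Gronwall estimate above and are routine; for (b) and (c) one additionally needs $x$ and $y$ close enough to each other, not merely to $\domeg$, to meet the $|x-y|\le\delta$ hypothesis of \eqref{passssln}.
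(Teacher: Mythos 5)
Your proof is correct and follows essentially the same route as the paper's: the same splitting of $\frac{d}{dt}\pse(X_x(t),X_y(t))$ into the \eqref{gammacontrol} term plus a Lipschitz commutator bounded via \eqref{estp1} for part (a), and the same use of \eqref{passssln} along the frozen-argument trajectory for parts (b) and (c), after identical geometric preliminaries (trajectories trapped near $\domeg$, $\tau$ small, Gr\"onwall). The only cosmetic differences are that your ordering of the constants in (a) --- fix $K_1$, then $K_2$, then shrink $\eta,\eps$ --- is actually more faithful to the quantifier structure of the statement than the paper's phrasing (which takes $\eps,\eta$ small first), and that you explicitly flag the implicit $|x-y|\le\delta$ requirement needed to invoke \eqref{passssln} in (b)--(c).
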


\begin{figure}[h!]
 \includegraphics[scale=0.90]{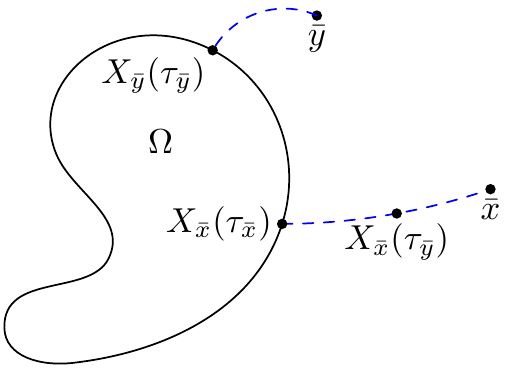}
\caption{Curves of (BC1) with different starting points in the oblique
  case.}
%\centerline{ Figure III}
\end{figure}

\begin{proof}
Consider a neighborhood of $\del\Omega$, $\mathcal{W}_r=\{x:\dist(x,\del\Omega)<r\}$, and let $r>0$
be so small that \eqref {gammacontrol} holds, $d(x)=\pm\dist(x,\del\Omega)$, and
$\gamma\cdot n\geq\frac\nu2$ in $\mathcal{W}_r$. Such a set exists by
the definition of $d$, (BC1), and continuity of $\gamma$ and $n$. In the set
$\mathcal{W}_r\cap \Omega^c$ the 
distance to boundary $f(t)=\dist(X_x(t),\del\Omega)=-d(X_x(t))$ is decreasing,
\begin{align}
\label{f}\dot f (t)=-Dd(X_x(t))\cdot \dot X_x(t)=
n(X_x(t))\cdot(-\gamma(X_x(t)))<-\frac\nu2, 
\end{align}
and hence $X_x(t)\in \mathcal{W}_r\cap \Omega^c$ for all $t\in[0,\tau_x]$ and
$x\in\mathcal{W}_r\cap \Omega^c$. 

Next we note that if $L$ is the Lipschitz constant of  $\gamma$, then by Gr\"onwall's inequality, 
\begin{equation}\label{est-traj}
|X_x(t)-X_y(t)| \leq e^{Lt}|x-y|.
\end{equation}
We estimate $\tau_x$, and hence also $\tau_y$ and $\tau$, by
integrating \eqref{f} from $t$ to $\tau_x$ and noting that $f(\tau_x)=0$
$$\frac\nu2(\tau_x-t)<
f(t)=\dist(X_x(t),\del\Omega)\leq
\dist(x,\del\Omega)\quad\text{for}\quad t\in[0,\tau_x].$$
Hence if $r$ is small, $\tau$ will also be small in $\mathcal{W}_r\cap \Omega^c$. In the rest of the
proof we take  $x,y\in\mathcal{W}_r\cap \Omega^c$, and then we take $r$ so small
that also $|X_x(t)-X_y(t)|\leq \eta \varepsilon$ for all $t \in [0, \tau]$
and $x,y\in\mathcal{W}_r\cap \Omega^c$ such that $|x-y|\leq\frac{\eta\eps}2$.

We now prove part (a). We start by using the definition of
$X_x(t)$ (see (BC2)) to show that
\begin{align*}
&\frac{d}{dt}\left[\pse (X_x(t),X_y(t))\right]
%= D_x \pse \cdot \dot X_x(t) +D_y \pse \cdot \dot X_y(t) \\
= - D_x \pse \cdot \gamma(X_x(t)) - D_y \pse \cdot \gamma(X_y(t))\\
&=
- [D_x \pse + D_y \pse] \cdot \gamma(X_x(t)) - D_y \pse \cdot [\gamma(X_y(t))-\gamma(X_x(t))].
\end{align*}
We may use \eqref{estp1} (check!) and the Lipschitz
continuity of $\gamma$ to have
\begin{align*}
&|D_y \pse \cdot [\gamma(X_y(t))-\gamma(X_x(t))]|\\
 &\le  L |X_x(t)-X_y(t)|\cdot C\Big(
{{|X_x(t)-X_y(t)|}\over{\varepsilon^2}} +1+\eta^2 K_1 e^{K_1 2\|d\|_\infty}+\eps\eta K_2\Big)\\
&\leq  L C
\Big({{|X_x(t)-X_y(t)|^2}\over{\varepsilon^2}} +\eta\eps\Big(1+\eps\eta K_2+ \eta^2K_1 e^{2K_1
  \|d\|_\infty}\Big) \Big), 
\end{align*}
and by  \eqref {gammacontrol} we immediatly find that
\begin{align*}
&- [D_x \pse + D_y \pse] \cdot \gamma(X_x(t))\\
& \le  -K_1\frac{\nu}{4}\, e^{- K_{1} [d(X_x(t)) + d(X_y(t))]}\, {\frac {|X_x(t)-X_y(t)|^2}{\varepsilon^2}}- K_2\frac{\nu}{4} \eta\varepsilon.
\end{align*}
Since $\gamma\cdot n\geq\frac\nu2$, we then find that
\begin{align*}
&\frac{d}{dt}\left[\pse (X_x(t),X_y(t))\right]\\
&\leq \Big(  L C-K_1 \frac{\nu}{4} e^{- K_{1} [d(X_x(t)) +
  d(X_y(t))]}\Big) {\frac {|X_x(t)-X_y(t)|^2}{\varepsilon^2}}\\
&\quad+
\Big(L C\Big(1+\eps\eta K_2+ \eta^2K_1 e^{2K_1
  \|d\|_\infty}\Big)-K_2\frac{\nu}{4} \Big) \eta\varepsilon   \\ 
& \leq -\tilde K \eta \eps
\end{align*}
for any given constant $\tilde K$ since we can take first $\eps,\eta$ small
enough and then $K_1$ and finally $K_2 $ as large as we want. The
conclusion follows by integrating  from $0$ to $\tau$.

To prove (b), we notice that $\tau=\tau_y \leq \tau_x$.  Since $\dot
X=-\gamma(X)$ and $d(X_x(t)) \leq d(X_y(\tau))=0$ for $\tau= \tau_y \leq t \leq 
\tau_x$, we can use \eqref{passssln} to find that 
\begin{align*}
&\frac{d}{dt}\left[ \pse (X_x(t),X_y(\tau))\right]=-
D_x \pse \cdot \gamma(X_x(t))\leq -g(X_x(t)).
\end{align*}
Part (b) now follows by integrating from $\tau_y$ to $\tau_x$.
The proof of (c) is just like the proof of (b) replacing $X_y(\tau)$
by $y$ and setting $\tau=0$. %Note that $y$ is now a point in $\omegb$.
 \end{proof}

\subsection{Proof of Theorem~\protect\ref{compprinciple2}}
In order to show that $u(x)-v(x)\leq 0$ in $\R^N$, we first notice that, by (BC2) and Lemma
\ref{backtoboundary}, 
$$ u(x)-v(x)\leq u(X_x(\tau_x))-v(X_x(\tau_x))\quad\text{for any}\quad x\in\omegb^c,$$
and hence since $X_x(\tau_x)\in\del\Omega$, it follows that $u-v$ is bounded from above in $\R^N$ and
$$M=\displaystyle{\sup_{\R^N} \{u(x)-v(x)\} =
  \max_{\omegb} \{u(x)-v(x)\}}.$$

In the rest of the proof we argue by contradiction assuming that 
$$M>0.$$ 
Then we define
$$w_\beta(x)= u(x)-v(x)-2\chi(-\beta d(x)),$$ 
where $\beta >0$  (small), $\chi$ is the function we introduced in
the proof of Theorem~\ref{compprinciple}, and $d$ is the signed
distance function to $\domeg$ ($d<0$ in $\omegb^c$). Since the
$\chi$-term vanishes on $\omegb$ and is strictly positive on $\omegb^c$,
$w_\beta$ has maximum points only on $\omegb$ and these points are
also maximum points of $u-v$.  

Now we double the variables introducing the function
$$ \Phi(x,y) = u(x)-v(y) - \pse(x,y)-\chi(-\beta d(x))-\chi(-\beta d(y)).$$
By standard arguments involving the definition of $\chi$ and the
properties of $\pse$ given in Lemma \ref{ftch} (in
particular \eqref{pos}), this
function achieves its maximum at a point $(\bar x,\bar y)\in
\R^N\times\R^N$ (depending on $\eps$, $\eta$ and
$\beta$). Moreover, 
%in view of \eqref{pos}, the inequality
%$2\Phi(\bx,\by)\geq\Phi(\bx,\bx)+\Phi(\by,\by)$, and semicontinuity of
%$u$ and $v$, we find that 
for fixed $\eta$ and $\beta$, 
$$ \frac{|\bar x- \bar y|^2} {\epsi^2} \to 0\qquad\text{as}\qquad \eps\ra0,$$
and $\bar x, \bar y$ converges (along
subsequences) to a maximum point $\tilde x$ of $w_\beta(x)$, i.e. to a
point in $\omegb$. In particular, $\bar x, \bar y$ will be arbitrarily close to
$\del\Omega$ if $\eps$ close enough to $0$.

We will show that $\bar x, \bar y$ are in $\omegb$ when $\eps>0$ is
small enough. 
Again we argue by contradiction assuming that $\bx,\by$ are not both in
$\omegb$. 
Assume e.g. that $\bx,\by\in\Omega^c$ and that $\tau_{\bar y}
\le \tau_{\bar x}$. We will get a contradiction to the maximum point
property by showing that
$$\Phi(\bar x,\bar y)<\Phi(X_{\bar x}(\tau_{\bar x}),X_{\bar y}(\tau_{\bar y})).$$
To do this, we start by using 
Lemma~\ref{backtoboundary} for both $u$ and $v$ to see that
\begin{align*}
\Phi(\bar x,\bar y)\le &\ u(X_{\bar x}(\tau_{\bar x}))-v(X_{\bar
  y}(\tau_{\bar y}))\\
& + \int_0^{\tau_{\bar y}}\Big(g(X_{\bar x}(s))-g(X_{\bar y}(s))\Big) ds +
\int_{\tau_\by}^{\tau_\bx} g(X_\bx(t)) dt\\
& - \pse(\bar x,\bar y)-\chi(-\beta d(\bx))-\chi(-\beta d(\by)).
\end{align*}
But from Lemma \ref{kl2}, using first part (b) and then
part (a),   
\begin{align*}
\int_{\tau_{\by}}^{\tau_\bx} g(X_\bx(t))\, dt &\le \pse
(X_\bx(\tau_{\bar y}),X_\by(\tau_{\bar y}))-\pse
(X_\bx(\tau_{\bx}),X_\by(\tau_{\bar y}))\\
& \leq \pse (\bar x, \bar y) -2\tilde K \tau \eta \eps- \pse (X_\bx(\tau_{\bar x}),X_\by(\tau_{\bar y})),
\end{align*}
and by Lipschitz regularity of $g$ and $\gamma$ and the estimate \eqref{est-traj},
 $$\int_0^{\tau_{\bar y}} \Big(g(X_{\bar x}(s))-g(X_{\bar y}(s))\Big) ds\le
 \tau_{\bar y} L_g e^{L_\gamma \tau_\by}|\bar x-\bar y|.$$
Hence we find that
\begin{align*}
& \Phi(\bar x,\bar y)\\
&\le \Phi(X_{\bar x}(\tau_{\bar x}),X_{\bar
   y}(\tau_{\bar y})) -2\tilde K \tau_{\bar y}\eta\varepsilon +
 \tau_{\bar y} L_g e^{L_\gamma \tau_\by} |\bar x-\bar y| - \chi(-\beta d(\bar
 x))-\chi(-\beta d(\bar y)), 
\end{align*}
and since $|\bar x-\bar y|\le \eta\varepsilon$, we get the
contradiction by choosing $\tilde K$ large enough.
A similar argument covers the case when $\tau_\by\geq\tau_\bx$, and we
can conclude that at least one of $\bar x$ and $\bar y$ belongs
 to $\omegb$.

Next we show that it is not possible that e.g. $\bx\in \omegb^c$ while
$\by\in\omegb$. This time we use 
Lemma~\ref{backtoboundary} for only $u$ to see that
\begin{align*}
\Phi(\bar x,\bar y)\le &\ u(X_{\bar x}(\tau_{\bar x}))-v({\bar y})+\int_{0}^{\tau_\bx} g(X_\bx(t)) dt- \pse(\bar x,\bar y)-\chi(-\beta d(\bx)).
\end{align*}
But by Lemma \ref{kl2}(c),
$$\int_{0}^{\tau_\bx} g(X_\bx(t)) dt - \pse(\bar x,\bar y)\leq
-\pse(X_{\bar x}(\tau_{\bar x}),\bar y),$$
and hence we find again a contradiction
$$\Phi(\bar x,\bar y)\le \Phi(X_{\bar x}(\tau_{\bar x}),\bar
y)-\chi(-\beta d(\bx))<\Phi(X_{\bar x}(\tau_{\bar x}),\bar
y).$$
The case that $\by\in \omegb^c$ while $\bx\in\omegb$ gives a
contraction in a similar way, and in view of previous arguments we can
conclude that $\bx,\by\in\omegb$, at least when $\eps>0$ is small enough. 

Since $\pse$  satisfies by \eqref{passssln} and
\eqref{passursln}, it follows that 
the equation (the sub and supersolution inequalites), and not the
boundary condition, has to hold if $\bx$ or $\by$ 
belongs to $\del\Omega$ and hence for all
$\bx,\by\in\omegb$.  By assumption, $u,v$ are bounded
  on $\omegb$ so that assuption (A3-3) can be applied with
  $R=\max_{\omegb}(|u|+|v|)$. At this point we can conclude the proof as in 
the $\R^N$-case, sending first $\eps\ra0$, then $\eta\ra0$, and finally
$\beta\ra0$.
We omit the standard details only noting that under the present
assumptions, essentially all the remaining 
details can be found in Section 5 in \cite{BI}. But see also 
\cite{JK,BBP,pha} for very similar results.

 \section{Penalization of the domain}\label{asymp}

In this section we show that our way of defining Neumann type boundary
conditions is consistent with the so-called penalization of the domain
method introduced by Lions and Sznitman in \cite{pls}. We extend the
results of \cite{pls} to our non-local setting, proving the
convergence of a sequence of solutions of penalized $\R^N$-problems to
the solution of \eqref{E}. We give separate results in the convex case of
Section \ref{sec:results} and the oblique case of Section \ref{gen-obl}.

\subsection{Neumann conditions on convex domains}
\label{Neu_subsec}
In this section we assume that $\Omega$ is convex and possibly
unbounded. Let $\bar d$ be the distance to $\Omega$ defined in
Section \ref{sec:results} and $n=D\bar d$ in $\omegb^c$. Note that
$\bar d=0$ in $\omegb$  
%By convexity, $\bar d$
%is $C^1$ and $n$ satisfies \eqref{prop-imp} in $\omegb^c$. 
and define $\tilde d=\min(\bar d,1)$. By the Lipschitz continuity of
$\tilde d$ and the convexity of $\bar d$, the continuous vector field 
$x\mapsto \tilde d(x)n(x)$ (extended by $0$ to $\omegb$) satisfies
\eqref{prop-imp} in $\R^N$. This property will play a key role below.

Moreover, we assume that (A1)--(A5) hold, and if
necessary, we extend the data and $F$ to $\R^N$ in a way that preseverves 
these properties. We study the following equation for the penalization
of the domain, cf. \cite{pls}
\begin{align}
   \label{E_eps}
F(x,u,Du,D^2 u, \I [u](x)) + \frac{1}{\kappa}\tilde d(x)n(x)\cdot Du=0
  \quad \text{ in }  \quad  \mathbb{R}^N.
\end{align}
 where $0<\kappa\ll 1$. Since $\tilde d(x) n(x)$ satisfies \eqref{prop-imp},
Equation~\eqref{E_eps} with $\kappa>0$ fixed satisfies (A1)--(A5) as long as $F$
does.

\begin{theorem}
\label{pendom_conv}
Assume that (A1), (A2), (A4), (A5) hold along with either (A3-1) or
(A3-2). Then the viscosity solution $u_\kappa$ 
of \eqref{E_eps} converge locally uniformly to a bounded
continuous function $u$ which is the viscosity solution of
\eqref{E}--\eqref{BC} according to Definition \ref{def_vsol}.
\end{theorem}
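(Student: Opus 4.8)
The plan is the standard half-relaxed limits / Barles--Perthame approach for penalization results, adapted to the nonlocal setting. First I would establish a uniform $L^\infty$ bound: since $(\tilde d\, n)\cdot Du$ vanishes on $\omegb$ and has the right sign (it is a dissipative term because $x\mapsto \tilde d(x)n(x)$ satisfies \eqref{prop-imp}), the constants $\pm M_F/\lamb0$ are still super/subsolutions of \eqref{E_eps} for every $\kappa>0$, so Proposition \ref{propRN}(b) gives $\|u_\kappa\|_\infty\le M_F/\lamb0$ uniformly in $\kappa$. Then define the half-relaxed limits $\overline u(x)=\limsup^*_{\kappa\to0}u_\kappa(x)$ and $\underline u(x)=\liminf_*\,_{\kappa\to0}u_\kappa(x)$, which are respectively usc and lsc, bounded, and satisfy $\underline u\le \overline u$ everywhere.

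The core of the argument is to show that $\overline u$ is a subsolution and $\underline u$ a supersolution of \eqref{E}--\eqref{BC1} in the sense of Definition \ref{def_vsol}. For points $x_0\in\Omega$ the penalization term is zero in a neighborhood, so the usual stability of viscosity solutions under half-relaxed limits (including the nonlocal term, handled as in \cite{BI,JK} by passing to the limit in $\I_\delta[\phi]+\I^\delta[u_\kappa]$ and using the uniform bound and Fatou/dominated convergence for the tail $\I^\delta$) gives the equation at $x_0$. The delicate points are $x_0\in\del\Omega$ and $x_0\in\omegb^c$. For $x_0\in\omegb^c$: if $\overline u-\phi$ has a strict max at $x_0$ with $\overline u(x_0)>-\infty$, pick $x_\kappa\to x_0$ realizing near-maxima of $u_\kappa-\phi$; writing the subsolution inequality for \eqref{E_eps} at $x_\kappa$ and keeping only the penalization term, $\frac1\kappa\tilde d(x_\kappa)n(x_\kappa)\cdot D\phi(x_\kappa)\le -F(\cdots)=O(1)$, so multiplying by $\kappa$ and letting $\kappa\to0$ forces $\tilde d(x_0)\,n(x_0)\cdot D\phi(x_0)\le0$, i.e.\ (since $\tilde d(x_0)>0$) $D\phi(x_0)\cdot n(x_0)\le0$, which is exactly the exterior condition in Definition \ref{def_vsol}. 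For $x_0\in\del\Omega$ one argues similarly: either $x_\kappa\in\omegb$ infinitely often, and then the equation inequality passes to the limit to give $\mathcal F[\overline u,\phi](x_0)\le0$; or $x_\kappa\in\omegb^c$, and then the penalization computation as above yields, after dividing by $\tilde d(x_\kappa)$ and using $n(x_\kappa)=D\bar d(x_\kappa)\to n\in N_\Omega(x_0)$ (cf.\ the characterization of $N_\Omega$ via limits of $D\bar d$), that $\inf_{n\in N_\Omega(x_0)}D\phi(x_0)\cdot n\le0$; in either case the min-condition of Definition \ref{def_vsol} holds. The supersolution statement for $\underline u$ is symmetric, using the one-sided estimate with a $\sup$ over $N_\Omega(x_0)$.

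Once $\overline u$ and $\underline u$ are sub- and supersolutions of \eqref{E}--\eqref{BC1}, Theorem \ref{compprinciple} (Comparison I) applies — here one also needs $\overline u$ bounded above and $\underline u$ bounded below, which we have from the uniform $L^\infty$ bound, and one needs \eqref{integ}, which holds automatically since both functions are bounded and $g\equiv0$ — and gives $\overline u\le\underline u$ on $\R^N$. Combined with $\underline u\le\overline u$ this forces $\overline u=\underline u=:u$, a continuous function; by a standard argument half-relaxed convergence to a continuous limit upgrades to local uniform convergence of $u_\kappa$ to $u$. Finally $u$ is simultaneously sub- and supersolution, hence the viscosity solution of \eqref{E}--\eqref{BC1}, and it satisfies \eqref{bnd_soln}.

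\textbf{Main obstacle.} The step I expect to be most delicate is the boundary analysis at $x_0\in\del\Omega$ in the case where the approximating points $x_\kappa$ lie in $\omegb^c$: one must carefully track the rate at which $\tilde d(x_\kappa)\to0$ against the $\frac1\kappa$ prefactor and extract the correct normal direction in $N_\Omega(x_0)$ as a limit of $D\bar d(x_\kappa)$, all while $\Omega$ may be non-smooth (corners) and unbounded — this is precisely where the convexity of $\bar d$, the bound \eqref{prop-imp} with $K=0$, and the characterization $N_\Omega(x_0)=\{\lim D\bar d(x_k):x_k\to x_0\}$ are used in an essential way, exactly as in the Lions--Sznitman argument \cite{pls}.
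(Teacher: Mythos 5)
Your proposal is correct and follows essentially the same route as the paper: uniform $L^\infty$ bounds from Proposition \ref{propRN}, half-relaxed limits, identification of the limit boundary value problem in the sense of Definition \ref{def_vsol}, and conclusion via Theorem \ref{compprinciple}. The only difference is cosmetic — the paper packages your case analysis at $\del\Omega$ and in $\omegb^c$ by rewriting \eqref{E_eps} as $G_\kappa=0$ (with $G_\kappa=\frac{\kappa}{\bar d}F+n\cdot p$ outside $\omegb$) and taking relaxed limits of $G_\kappa$, which automatically produces the $\min/\max$ over $N_\Omega(x_0)$ that you derive by hand.
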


\begin{remark}
This result provides an existence result for
\eqref{E}--\eqref{BC}. In contrast to the more difficult
Dirichlet case in \cite{BCI}, we have existence also when the there is
loss of boundary conditions. 
\end{remark}

We need the following auxilliary result that follows from Proposition \ref{propRN}.
\begin{lemma}
\label{corRN}
Assume that (A1), (A2), (A4), (A5) hold along with either (A3-1) or (A3-2).
Then there exists a
unique bounded viscosity solution $u_\kappa$ of \eqref{E_eps} satisfying
$$|u_\kappa(x)|\leq \frac{M_F}{\gamma}\quad\text{in}\quad \R^N.$$
\end{lemma}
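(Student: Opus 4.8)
The plan is to deduce Lemma~\ref{corRN} directly from Proposition~\ref{propRN} by verifying that, for each fixed $\kappa>0$, equation~\eqref{E_eps} is a particular instance of equation~\eqref{E} to which that proposition applies. Write $\widetilde F_\kappa(x,u,p,X,l):=F(x,u,p,X,l)+\frac1\kappa\tilde d(x)n(x)\cdot p$. First I would check degenerate ellipticity: since the added term depends only on $(x,p)$, monotonicity in $(X,l)$ is inherited from $F$, so \eqref{deg-ell} holds for $\widetilde F_\kappa$. Assumption (A2) is likewise immediate because the extra term is independent of $u$, so $\widetilde F_\kappa(x,u,p,X,l)-\widetilde F_\kappa(x,v,p,X,l)=F(x,u,p,X,l)-F(x,v,p,X,l)\ge\lambda_0(u-v)$ for $u\ge v$. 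Assumption (A4) is immediate as well, since the new term has no $l$-dependence. For (A5) note $\widetilde F_\kappa(x,0,0,0,0)=F(x,0,0,0,0)$, so $M_{\widetilde F_\kappa}=M_F$.

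The one genuinely substantive point is that either (A3-1) or (A3-2) is preserved after adding $\frac1\kappa\tilde d(x)n(x)\cdot p$. Here I would use exactly the observation already made in the paragraph preceding the lemma (and in Section~\ref{Neu_subsec}): the vector field $b_\kappa(x):=\frac1\kappa\tilde d(x)n(x)$ is bounded (because $\tilde d\le 1$ and $|n|=1$) and globally Lipschitz continuous on $\R^N$, the Lipschitz bound being the one-sided bound \eqref{prop-imp} upgraded to a two-sided one via convexity of $\bar d$; equivalently one invokes the sentence ``the continuous vector field $x\mapsto\tilde d(x)n(x)$ \ldots satisfies \eqref{prop-imp} in $\R^N$,'' together with boundedness. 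A bounded, globally Lipschitz drift is admissible in both (A3-1) and (A3-2): in the estimate \eqref{cond:a31} resp. \eqref{cond:a32} the extra contribution is $b_\kappa(x)\cdot(\e^{-1}(x-y)+s_i(\beta))-b_\kappa(y)\cdot(\e^{-1}(x-y)+s_i(\beta))$, which by Lipschitz continuity of $b_\kappa$ and boundedness of $s_i$ is bounded by $C_\kappa|x-y|\big(\e^{-1}|x-y|+1\big)=C_\kappa\big(\e^{-1}|x-y|^2+|x-y|\big)$, precisely of the form absorbed into the moduli $\omega_R$ on the right-hand sides. One also checks trivially that $b_\kappa$ is itself bounded and continuous so the uniform-continuity requirement in (A3-2), resp. the local structure in (A3-1), is untouched. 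Finally (A1) only concerns $j$ and $\mu$, which are unchanged.

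Having verified that $\widetilde F_\kappa$ satisfies (A1), (A2), (A4) and one of (A3-1)/(A3-2), plus (A5), I apply Proposition~\ref{propRN}(b) with $F$ replaced by $\widetilde F_\kappa$ and $\Omega=\R^N$: this yields a unique bounded viscosity solution $u_\kappa$ of \eqref{E_eps} with $|u_\kappa(x)|\le M_{\widetilde F_\kappa}/\lambda_0 = M_F/\lambda_0$ in $\R^N$, which is the assertion (noting the ``$\gamma$'' in the displayed bound of Lemma~\ref{corRN} is a typo for $\lambda_0$).

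I do not expect any real obstacle here; the only thing requiring a line of care is the preservation of (A3-1)/(A3-2) under the bounded Lipschitz perturbation $b_\kappa$, and that is exactly the point the authors have pre-positioned by remarking that $\tilde d\,n$ satisfies \eqref{prop-imp} on all of $\R^N$ and is bounded. The rest is bookkeeping. I would therefore present the proof as: ``Fix $\kappa>0$. As noted above, $x\mapsto\frac1\kappa\tilde d(x)n(x)$ is bounded and globally Lipschitz on $\R^N$; hence $\widetilde F_\kappa(x,u,p,X,l):=F(x,u,p,X,l)+\frac1\kappa\tilde d(x)n(x)\cdot p$ satisfies (A1), (A2), (A4), (A5) and the same one of (A3-1)/(A3-2) as $F$, with $M_{\widetilde F_\kappa}=M_F$. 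Proposition~\ref{propRN}(b) applied to $\widetilde F_\kappa$ on $\R^N$ gives the unique bounded solution $u_\kappa$ of \eqref{E_eps} together with the bound $|u_\kappa|\le M_F/\lambda_0$. $\square$''
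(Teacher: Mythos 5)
Your proposal is correct and is essentially the paper's argument: the paper disposes of the lemma in one line, asserting that since $\tilde d(x)n(x)$ is bounded and satisfies \eqref{prop-imp}, equation \eqref{E_eps} with $\kappa$ fixed satisfies (A1)--(A5) whenever $F$ does, so Proposition~\ref{propRN}(b) applies (and yes, the ``$\gamma$'' in the displayed bound is a typo for $\lambda_0$). Your write-up just fills in the bookkeeping the authors omit.

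One justification in your argument is off, even though your conclusion survives. A one-sided Lipschitz bound such as \eqref{prop-imp} does \emph{not} upgrade to a two-sided Lipschitz bound ``via convexity'' --- the gradient of a convex function is monotone but need not be Lipschitz --- so that sentence is not a valid inference. The point the authors are pre-positioning is different: in \eqref{cond:a31} and \eqref{cond:a32} the dangerous contribution of the drift is exactly $\e^{-1}\,(b_\kappa(y)-b_\kappa(x))\cdot(x-y)$, whose \emph{sign} is such that the one-sided condition \eqref{prop-imp} alone gives the upper bound $K\e^{-1}|x-y|^2$, and the remaining terms involving $s_i(\beta)$ are controlled by boundedness of $b_\kappa$. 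That is why only \eqref{prop-imp} is invoked. Your two-sided estimate does also happen to be available here, because $\bar d(x)n(x)=x-P_x$ with $P$ the ($1$-Lipschitz) metric projection onto the convex set $\omegb$, so $\tilde d\,n$ really is globally Lipschitz --- but that requires this projection argument, not the ``upgrade'' you cite. With either repair the proof is complete and matches the paper's route.
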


\begin{proof}
Note that $u_\kappa$ is bounded uniformly in $\kappa$, and that we
may rewrite \eqref{E_eps} in the following equivalent way
\begin{align}
   \label{E_eps2}
&G_\kappa(x,u,Du,D^2 u, \I [u](x)) = 0\quad \text{ in }  \quad
\mathbb{R}^N\\ 
\intertext{where}
&G_\kappa(x,r,p,X,l)=\begin{cases}
F(x,r,p,X,l), &\text{for } x\in\bar\Omega,\\
\frac{\kappa}{\bar d(x)} F(x,r,p,X,l)+ n(x)p,&\text{for } x\in\bar\Omega^c.
\end{cases}
\end{align}
Now we introduce the half relaxed limits
$$\underline f(x):={\liminf}_* f_\kappa(x)=\liminf_{
\scriptsize
\begin{array}{l}
 y \ra
    x\\
\kappa\ra0
\end{array}}
f_\kappa(y), \quad\overline f(x):={\limsup}^* f_\kappa(x)=\limsup_{
\scriptsize
\begin{array}{l}
y  \ra
    x\\
\kappa\ra0
\end{array}}
f_\kappa(y). 
$$ 
Note that $\underline{F}=F$ and 
$$\underline{G}(x,r,p,X,l)=\begin{cases}
F(x,r,p,X,l)& \text{when}\quad x\in\Omega,\\
\min\{F(x,r,p,X,l),\inf_{n\in N_\Omega(x)}n\cdot p\}& \text{when}\quad x\in\partial\Omega,\\
n(x)p& \text{when}\quad x\in\bar\Omega^c,
\end{cases}$$
and in a similar way we find that $\overline G$ is like $\underline G$
with $\max$/$\sup$ replacing the $\min$/$\inf$. As a consequence of
the stability of viscosity solutions, see e.g. Theorem 1 in \cite{BI}, 
$\overline u=\limsup^*u_\kappa$ is a viscosity subsolution of 
$$\underline{G}(x,u,Du,D^2u,\I[u])=0\quad\text{in}\quad\R^N,$$
while $\underline u=\limsup_*u_\kappa$ is a viscosity supersolution of
$$\overline{G}(x,u,Du,D^2u,\I[u])=0\quad\text{in}\quad\R^N.$$
By Definition \ref{def_vsol} this means that $\overline u$ and $\underline
u$ are sub- and supersolutions of \eqref{E}--\eqref{BC}, and hence by 
comparison, Theorem \ref{compprinciple},
$$\overline u\leq \underline u.$$
The opposite inequality is true by definition of $\overline u$, and
hence we have $\overline u=\underline u=:u$. It follows that
$u$ is continuous and $u_\kappa\ra u$ locally uniformly,
 as is standard in viscosity solution theory. 
\end{proof}

\subsection{Oblique boundary value problems in bounded smooth domains}\label{sec:5.2}

In this section, we assume as in Section \ref{gen-obl}, that $\Omega$
is a bounded $C^2$ domain. We study the following equation for the penalization
of the domain, cf. \cite{pls}: 
\begin{align}
   \label{E_kappa}
F(x,u_\kappa,Du_\kappa,D^2 u_\kappa, \I [u_\kappa](x)) +
\frac{1}{\kappa}\tilde d(x)[\gamma(x)\cdot Du_\kappa-g]=0
  \quad \text{ in }  \quad  \mathbb{R}^N.
\end{align}
where $0 < \kappa \ll 1$ and  $\tilde d$ is defined as in the previous section.  

We want to prove that we can obtain the oblique boundary value problem
\eqref{E} from the penalized problem \eqref{E_kappa} in the
limit as $\kappa\ra0$. In \eqref{E} (Definition \ref{def_vsol2}), only
$F$'s values at $\bar\Omega$ play any role, and we may modify
equation \eqref{E_kappa} in $\bar\Omega^c$ and still obtain \eqref{E}
from \eqref{E_kappa} in the 
limit as long as (A1)--(A5) still hold.

In order to avoid difficulties related to
comparison results for sub and supersolutions, we assume that
$F(x,u,p,M,l)\equiv \lamb0 u$ for $x$ large enough, say for $|x|\geq \tilde R$, where $\lamb0$ is
given by (A2). Taking into account the fact that the truncation on the
distance function implies that  $\tilde d(x)\equiv 1$ for $x$ large
enough, the equation outside a large enough ball reduces to
$$
\lamb0 u_\kappa + \frac{1}{\kappa}[\gamma(x)\cdot Du_\kappa-g] = 0\; ,
$$
which can be treated by a slight adaptation of the technics used in
Section~\ref{prelim} as we will see it later on. For other
extensions of $F$, additional conditions are typically needed 
to handle the growth (typically linear) of the solutions at infinity.

 Here it is unavoidable to impose additional assumptions on $\gamma,g,j,\mu$ to
  satisfy the integrability assumption (\ref{integ}), i.e. to balance the
  growth $u(x+j(x,\cdot))$ with the decay of $\mu$ at
  infinity for solutions $u$ of \eqref{E} and \eqref{E_kappa}. We are going to use (BC3) and refer the reader to the discussion at the end of Section~\ref{prelim}.

We just recall that, in the case when $g$ has compact support, the
solutions are expected to be bounded by Lemma~\ref{backtoboundary}
and no additional assumption on $j$ and
$\mu$ is needed.  On the contrary,  if, for example, $g\equiv 1$, then
the integral of $g$ in (\ref{maxatbdr}) suggests a linear growth and
one has to impose suitable hypothesis on $\gamma, j$ and $\mu$ in order to
satisfy (\ref{integ}). Moreover, if we were considering more general
extension of $F$, we would need a framework where we can compare sub
and supersolutions with linear growth. Our restrictive extension
allow us to avoid such (useless) technicalities.

%\begin{remark}
%\label{rem_bnds}
%Lemma \ref{backtoboundary} with $t=\tau_x$ implies that the growth of a solution
%$u(x)$ of \eqref{E} is bounded by
%\begin{align*}
%|u(x)| &\leq \max_{\omegb}|u|+\|g\|_\infty|\tau_x|&&\text{for
%  bounded $g$, and}\\
%|u(x)| &\leq \max_{\omegb}|u|+\|g\|_\infty\max_{y\in
%  \supp(g)}|\tau_y|<\infty &&\text{if $g$ also has compact support.}
%\end{align*}
%Here $\supp(g)$ is the support of $g$ and the maximum over this
%compact set is finite by continuity of $\tau_y$. If $\Omega$ is convex
%and $\gamma=n$ in $\Omega^c$, the outward unit 
%normal vector field (see Secion \ref{sec:results}), then 
%$$\tau_x=\dist(x,\Omega)\leq C(1+|x|).$$
% A similar linear growth estimate also holds when $\gamma\cdot n\geq\nu>0$
%in all of $\Omega^c$. 
%\end{remark}}
%

\begin{theorem}
\label{PenDom2}
Assume that (A1)--(A5) and (BC1)--(BC3) hold. Then, for any $\kappa>0$, there exists a unique continuous viscosity solution $u_\kappa$ of \eqref{E_kappa} which is uniformly locally bounded. Moreover, as $\kappa \to 0$, $u_\kappa$ converges locally uniformly to the unique viscosity solution $u$ of \eqref{E}--\eqref{BC}.
\end{theorem}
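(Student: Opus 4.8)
The plan is to follow the same architecture as the proof of Theorem~\ref{pendom_conv} in the convex case, but now using the machinery of Section~\ref{gen-obl} (the complicated test function and Lemma~\ref{kl2}) in place of convexity, and paying attention to the integrability/growth issues controlled by (BC3). First I would establish existence, uniqueness, and a (locally uniform in $\kappa$) bound for $u_\kappa$, the solution of \eqref{E_kappa}. When $g$ has compact support, Lemma~\ref{backtoboundary} applied to \eqref{E_kappa} (which for each fixed $\kappa$ is a full-space equation satisfying (A1)--(A5), since $\tilde d\gamma$ is Lipschitz) gives an a priori $L^\infty$-bound independent of $\kappa$, and then Proposition~\ref{propRN}(b) gives a unique bounded viscosity solution. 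When $g$ does not have compact support, one uses the special extension $F\equiv\lamb0 u$ and $\tilde d\equiv1$ for $|x|\ge\tilde R$: outside the large ball the equation is $\lamb0 u_\kappa+\frac1\kappa[\gamma\cdot Du_\kappa-g]=0$, which by the characteristics argument of Section~\ref{prelim} (integrating along the flow of $-\gamma$, now with the zeroth-order term $\lamb0$) yields the linear growth bound $|u_\kappa(x)|\le C(1+|x|)$ uniformly in $\kappa$ via (BC3); this is exactly what is needed to verify \eqref{integ} for the limit. One must check that for each $\kappa>0$ the penalized problem still has a unique solution in this weighted class -- here the restrictive extension is what makes the comparison at infinity trivial.

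Next I would pass to the limit using half-relaxed limits. Set $\overline u=\limsup^* u_\kappa$ and $\underline u=\liminf_* u_\kappa$, which are finite by the uniform (local, possibly linearly growing) bounds. Rewrite \eqref{E_kappa} as $G_\kappa(x,u,Du,D^2u,\I[u])=0$ with $G_\kappa=F$ on $\omegb$ and $G_\kappa=\frac{\kappa}{\tilde d(x)}F+\gamma(x)\cdot p-g$ on $\omegb^c$ (using $\tilde d>0$ there, near the boundary), and compute the relaxed semilimits $\underline G,\overline G$: on $\Omega$ they equal $F$, on $\del\Omega$ they pick up the $\min$/$\max$ with $\gamma(x)\cdot p-g(x)$, and on $\omegb^c$ they reduce to $\gamma(x)\cdot p-g(x)$. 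By the stability theorem for viscosity solutions of nonlocal equations (Theorem~1 in \cite{BI}), $\overline u$ is a subsolution of $\underline G=0$ and $\underline u$ is a supersolution of $\overline G=0$; comparing with Definition~\ref{def_vsol2} this says precisely that $\overline u$ and $\underline u$ are respectively a sub- and a supersolution of \eqref{E}--\eqref{BC}. One subtlety is that the stability argument requires passing to the limit inside the nonlocal term, which is where the uniform growth bound plus (BC3) guarantees the needed uniform integrability of $u_\kappa(x+j(x,\cdot))$ so that no mass escapes.

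Then I would invoke Comparison~II (Theorem~\ref{compprinciple2}), whose hypotheses (A1), (A2), (A3-3), (A4), (BC1), (BC2) are assumed, to conclude $\overline u\le\underline u$ in $\R^N$; since $\underline u\le\overline u$ by construction, we get $\overline u=\underline u=:u$, so $u$ is continuous, it is the viscosity solution of \eqref{E}--\eqref{BC}, and $u_\kappa\to u$ locally uniformly by the standard half-relaxed-limits convergence argument. Uniqueness of the limit is again Theorem~\ref{compprinciple2}.

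The main obstacle I expect is the a priori estimate on $u_\kappa$ in the non-compactly-supported $g$ case, and more precisely making the convergence of the nonlocal operator $\I[u_\kappa]$ rigorous when the solutions only have linear growth: one must check that the linear growth of $u_\kappa$ together with the second part of (BC3) (uniform $\mu$-integrability of $|j(x,z)|$ away from the origin) genuinely controls $\I^\delta[u_\kappa]$ uniformly in $\kappa$, so that the relaxed-limit stability result applies. This is essentially the ``connection between the behavior at infinity and well-posedness of the nonlocal term'' flagged at the end of Section~\ref{prelim}, and it is the only place where the argument departs non-trivially from the convex case of Theorem~\ref{pendom_conv}. The geometric heart of the proof -- forcing the doubled-variable maximum points into $\omegb$ -- is not redone here; it is entirely absorbed into Theorem~\ref{compprinciple2}, which we are allowed to use.
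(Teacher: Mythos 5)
Your proposal is correct and follows essentially the same route as the paper: well-posedness of the penalized problem via the characteristics/flow argument outside a large ball (using the restrictive extension of $F$ and (BC3) to get locally uniform, at most linearly growing bounds), then half-relaxed limits with the rewriting $G_\kappa=\frac{\kappa}{\bar d}F+\gamma\cdot p-g$ on $\omegb^c$, stability, and Comparison~II. The only presentational difference is that the paper packages the growth control into an explicit smooth barrier $\theta$ with $\gamma\cdot D\theta\geq 1$ (Lemma~\ref{theta}), so that $\pm(C_1+C_2\theta)$ serve as super- and subsolutions for Perron's method, whereas you derive the same linear bound directly along the trajectories.
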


In the proof we use the following lemma.

\begin{lemma} \label{theta}
Assume (BC1)-(BC3). There exists a $C^\infty$ function $\theta : \R^N \to \R$ such that
$$ \gamma (x) \cdot D\theta (x) \geq 1\quad \hbox{for $x$ in a neighborhood $\mathcal{W}$ of  }\omegb^c\; .$$ Moreover $\theta $ satisfies 
$$ |\theta (x) | \le \tilde c ( 1 +|x|)\quad \hbox{in  } \R^N ,$$
for some $\tilde c >0$.
\end{lemma}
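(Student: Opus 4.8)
\textbf{Proof proposal for Lemma~\ref{theta}.}
The plan is to build $\theta$ by first constructing a suitable Lipschitz function on a neighborhood of $\omegb^c$ and then mollifying it to get a $C^\infty$ function with essentially the same properties. First I would recall from (BC2) that for each $y\in\omegb^c$ the integral curve $X_y(\cdot)$ of $-\gamma$ reaches $\domeg$ at the finite time $\tau_y$, and I would set $\theta_0(y):=\tau_y$ on $\omegb^c$, extended by $0$ on $\omegb$. Along the trajectory we have $\frac{d}{dt}\tau_{X_y(t)}=-1$ (the remaining time to the boundary decreases at unit rate), so formally $\gamma(x)\cdot D\theta_0(x) = -\frac{d}{dt}\big|_{t=0}\tau_{X_x(t)} = 1$ in $\omegb^c$. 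To make this rigorous I would use the same viscosity-solution argument as in the proof of Lemma~\ref{backtoboundary}: $\theta_0$ is (at least) a Lipschitz viscosity solution of $\gamma\cdot D\theta_0 = 1$ in a neighborhood of $\omegb^c$, where Lipschitz regularity of $\tau_y$ in $y$ follows from (BC1) (Gr\"onwall applied to \eqref{ode}, together with the transversality $\gamma\cdot n\geq\nu$ on $\domeg$ which gives a uniform lower bound on the ``exit speed'' and hence Lipschitz control of the hitting time near the boundary). The linear growth bound $|\theta_0(x)|=\tau_x\leq \tilde c(1+|x|)$ is exactly the first half of (BC3).

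The second step is to pass from the Lipschitz $\theta_0$ to a smooth $\theta$. I would take a standard mollifier $\rho_\sigma$ and set $\theta := \rho_\sigma * \tilde\theta_0$, where $\tilde\theta_0$ is a bounded-slope Lipschitz extension of $\theta_0$ to all of $\R^N$ (keeping the linear growth). Since $\gamma$ is Lipschitz and $\gamma\cdot D\theta_0\geq 1$ a.e.\ on a neighborhood of $\omegb^c$, a routine estimate gives $\gamma(x)\cdot D\theta(x)\geq 1-\omega(\sigma)$ on a slightly smaller neighborhood, where $\omega(\sigma)\to0$; rescaling $\theta$ by $(1-\omega(\sigma))^{-1}$ (or mollifying $(1+\delta)\theta_0$ for a small fixed $\delta>0$ first) then yields $\gamma\cdot D\theta\geq 1$ on a neighborhood $\mathcal W$ of $\omegb^c$. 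Mollification preserves the linear growth bound up to adjusting $\tilde c$, so $|\theta(x)|\leq \tilde c(1+|x|)$ on $\R^N$.

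I expect the main obstacle to be the regularity of the hitting time $y\mapsto\tau_y$ near $\domeg$, and more precisely making the identity ``$\gamma\cdot D\tau = 1$'' precise in a full neighborhood of $\omegb^c$ (not just where trajectories are long). Near a boundary point the trajectory may be very short and $\tau_y$ small, but the transversality condition $\gamma\cdot n\geq\nu>0$ from (BC1) together with the $C^2$ (or Lipschitz) structure of the relevant objects should give both the Lipschitz bound and the viscosity (in)equality uniformly; alternatively, one can flatten the boundary locally and solve the transport equation $\gamma\cdot D\theta=1$ directly by the method of characteristics in each chart, then patch with a partition of unity. Either way, once $\theta_0$ is in hand the smoothing step is entirely routine.
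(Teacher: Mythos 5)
Your overall strategy (hitting time as a Lipschitz solution of the transport equation, then mollify) is exactly the paper's, but the primary construction as written has two concrete gaps. First, you take $\theta_0(y)=\tau_y$ on $\omegb^c$ and extend it by $0$ on $\omegb$. This function satisfies $\gamma\cdot D\theta_0=1$ only in the open set $\omegb^c$, while the lemma needs the inequality on a \emph{neighborhood} $\mathcal W$ of $\omegb^c$, hence on a strip inside $\Omega$ as well. Across $\domeg$ your $\theta_0$ has a kink (directional derivative $1$ along $\gamma$ from outside, $0$ from inside), so after mollification the quantity $\gamma\cdot D\theta$ near $\domeg$ is an average of $1$ and $0$ and is \emph{not} $\geq 1-\omega(\sigma)$; no rescaling repairs this. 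The paper's fix is to replace $\tau_y$ by the exit time $\tau_y^\delta$ from the larger set $D_\delta=\{d(x)\leq\delta\}$, whose boundary $\{d=\delta\}$ lies strictly inside $\Omega$: by the transversality $\gamma\cdot n\geq\nu$ the trajectories cross $\domeg$ and continue to $\{d=\delta\}$, so $w(y)=2\tau_y^\delta$ solves $\gamma\cdot Dw=2$ on the full open neighborhood $\{d<\delta\}\supset\omegb^c$ and the kink is pushed away from $\mathcal W$. You do gesture at this fix (``solve the transport equation in each chart''), but it needs to be the main construction, not a fallback.

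Second, the hitting time is only \emph{locally} Lipschitz: Gr\"onwall over a time interval of length $\tau_y\leq\tilde c(1+|y|)$ gives a Lipschitz constant of order $\nu^{-1}e^{L\tilde c(1+|y|)}$, which blows up as $|y|\to\infty$ (recall $\omegb^c$ is unbounded even though $\Omega$ is bounded). Consequently the mollification error $\|Dw\|_{L^\infty(B(x,\eps))}L_\gamma\eps$ is not uniformly small for a single global $\sigma$, and your claim $\gamma\cdot D\theta\geq 1-\omega(\sigma)$ with $\omega(\sigma)\to0$ fails at infinity. One must choose the mollification parameter locally (small where the Lipschitz constant is large) and patch the local regularizations by a covering/partition-of-unity argument, which is what the paper does; solving the equation with right-hand side $2$ rather than $1$ then absorbs the local error without any rescaling. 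The linear growth bound via (BC3) is handled correctly in your proposal.
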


We prove this result after the proof of Theorem \ref{PenDom2}.

\begin{proof}[Proof of Theorem \ref{PenDom2}]  We just sketch the
  proof of the existence and uniqueness of $u_\kappa$ when $g$ is
  not compactly supported. This case involves the function $\theta$ of
  Lemma~\ref{theta} while the other case is easier and involves a
  similarly defined but bounded function $\theta$
  (where $D\theta\cdot\gamma>1$ only on a compact set). 

The strong comparison principle (and hence uniqueness) for
  \eqref{E_kappa} holds by standard 
  argument and a slight modification of the argument of
  Section~\ref{prelim} that we explain now. If $u$ is a subsolution of
  \eqref{E_kappa} then we have  
$$
\kappa\lamb0 u_\kappa + \gamma(x)\cdot Du_\kappa-g = 0\quad\hbox{in  }\bar B^c_{\tilde R}\; ,
$$
where $\tilde R$ is defined above, $B_{\tilde R}$ is the ball centered at $0$ with the (large) radius $\tilde R$. A slight modification of the arguments of 
Section~\ref{prelim} shows that, if $y\in \bar B^c_{\tilde R}$ and if $X_y(s) \in \bar B^c_{\tilde R}$ for $s\in [0,t)$ then
$$
u_\kappa (y) \le \int_0^t g(X_y(s))\exp(-\kappa\lamb0 s)ds + u_\kappa (X_y(t)) \exp(-\kappa\lamb0 t).
$$
Using this result, we can reduce to the case where the maximum points
are in a fixed compact subsets of $\R^N$ and then classical comparison
arguments apply.

Using Lemma~\ref{theta}
and (A2), it is easy to check that, choosing first $C_2>0$ and then
$C_1$ large enough, $\pm (C_1 +C_2\theta(x))$ are respectively
viscosity super and subsolutions of \eqref{E_kappa}. Then we can apply
Perron's method to obtain the exisitence of a solution $u_\kappa$ such that
$$-(C_1 +C_2\theta(x))\leq u_\kappa (x) \leq C_1 +C_2\theta(x) \quad \hbox{in  } \R^N \; .$$

Since the $u_\kappa$'s are locally uniformly bounded, we can use the
half-relaxed limits method.  We rewrite \eqref{E_kappa} in the
following equivalent way as 
$G_\kappa(x,u,Du,D^2 u, \I [u](x)) = 0$ in $\R^N$ where
\begin{align*}
&G_\kappa(x,r,p,X,l)=\begin{cases}
F(x,r,p,X,l), &\text{for } x\in\bar\Omega,\\
\frac{\kappa}{\bar d(x)} F(x,r,p,X,l)+ \gamma(x)p-g,&\text{for } x\in\bar\Omega^c.
\end{cases}
\end{align*}
As in the proof of Theorem \ref{pendom_conv}, we compute the half relaxed limits
and find that
$$\underline{G}(x,r,p,X,l)=\begin{cases}
F(x,r,p,X,l)& \text{when}\quad x\in\Omega,\\
\min\{F(x,r,p,X,l),\gamma(x)p-g\}& \text{when}\quad x\in\partial\Omega,\\
\gamma(x)\cdot p-g& \text{when}\quad x\in\bar\Omega^c,
\end{cases}$$
and that $\overline G$ is like $\underline G$ with a $\max$ replacing
the $\min$, and we find that $\overline u$ is a
viscosity subsolution of the $\underline{G}(x,u,Du,D^2u,\I[u])=0$ and
while $\underline u$ is a viscosity supersolution of
the equation $\overline{G}(x,u,Du,D^2u,\I[u])=0$  in $R^N$. We
conclude as before that $\overline u=\underline u=:u$ and $u_\kappa\ra u$ locally
uniformly. 
\end{proof}

Now we give the proof of Lemma \ref{theta}.

\begin{proof}[Proof of Lemma \ref{theta}] This is a routine adaptation
  of classical arguments. 
Taking $\delta >0$ small enough and denoting by $D_\delta :=\{x \in \R^N; d(x) \leq \delta\}$ where $d$ is defined in Section~\ref{testfunc}, we can solve the problem
\begin{align}
\label{w-eqn}
\gamma (x) \cdot Dw (x) = 2 \quad \hbox{in  }D_\delta\; ,\quad w=
0\quad \hbox{on  }\partial D_\delta\; .
\end{align}
Indeed, arguing as in Lemma~\ref{backtoboundary} with $g=2$ , we have, for any $y\in D_\delta$
$$w (y)= 2\tau^\delta_y\qquad\text{for}\qquad
\tau^\delta_y=\inf\{t>0;\ X_y(t) \in \partial D_\delta\},$$ 
and the function $w$  is finite (thus well-defined)  because of
(BC2).

We prove that $w$ is locally Lipschitz continous in $\overline
D_\delta$ if $\delta$ is so small that by (BC1),
$$\gamma(x)\cdot n(x)>\frac\nu2\qquad\text{in}\qquad
{\Delta_\delta}=\{x: |d(x)|< \delta\}.$$
We first check that $w$ is Lipschitz continuous in $\ol{\Delta}_\delta$. Let  $x,y\in
\ol{\Delta}_\delta$, $f(t) := d( X_x (t+\tau^\delta_y))$,
and note that if $\tau_x>\tau_y$, then
 $$f'(t) = \dot X_x (t+\tau^\delta_y))\cdot Dd(X_x
 (t+\tau^\delta_y))=\gamma(X_x (t+\tau^\delta_y))\cdot n(X_x
 (t+\tau^\delta_y))$$
for $t\in(0,\tau_x-\tau_y)$.
We integrate from $0$ to $\tau^\delta_x-\tau^\delta_y$ and use (BC1)
to find that
\begin{equation}\label{tauestimate}
 \frac\nu2|\tau^\delta_x-\tau^\delta_y | \le  |d( X_x (\tau^\delta_y))| \le  |X_x(\tau^\delta_y)-X_y(\tau^\delta_y)|,
\end{equation}
where the last inequality is a consequence of the definition of the
distance of the point $ X_x (\tau^\delta_y)$ to the boundary. Then if
 $L$ is the Lipschitz constant of $\gamma$, inequality
 \eqref{est-traj} holds and we may 
 use e.g. (BC3) to obtain that
\begin{equation}\label{taulipsch}
\frac\nu2|\tau^\delta_x-\tau^\delta_y | \le  e^{L\tilde c(1+ R)}|x-y|,
\end{equation}
where $R=\max_{x\in \ol{\Delta}_\delta}|x|$. 
It follows that $w$ is Lipschitz in
$\ol{\Delta}_\delta$.

Let $x,y\in \ol D_\delta\setminus \ol{\Delta}_\delta$ be near
one another and take a $T> 0$ such that $X_x(T)\in \Delta_\delta$.
Such $T$ exists and $T\leq \tilde c(1+|x|)$ by (BC3). By inequality
\eqref{est-traj}, we can (and do)
take $y$ close enough to $x$ so that also $X_y(T)\in
\Delta_\delta$. Then $\tau_x^\delta=T+\tau_{X_x(T)}^\delta$ and
$\tau_y^\delta=T+\tau_{X_y(T)}^\delta$, and hence by (BC3) and inequalities
\eqref{taulipsch} and  \eqref{est-traj},
\begin{equation*}
\frac\nu2|\tau^\delta_x-\tau^\delta_y | \le  e^{L\tilde c(1+
  R)}|X_x(T)-X_y(T)|\leq e^{L\tilde c(1+ R)}e^{L\tilde c(1+ |x|)}|x-y|.
\end{equation*}
This completes the proof of local Lipschitz continuity of $w$.

 The next step is to regularize $w$ through a classical convolution
 argument to obtain the smooth function $\theta$. But since $w$ is
 only locally Lipschitz continuous, we have 
 to regularize locally and use a covering argument to build the global
 regularization of $w$. The covering argument is completely standard
 and will not be detailed here. 
 
 Locally we define $w_\eps(x)=w*\rho_\eps(x)$ for $x\in
D_{\frac{\delta}2}$ where $0<\eps<\frac\delta2$ and
$\rho_\eps(x)$ is the standard mollifier, i.e. a positive $C^\infty$-function with
mass one and support in $|x|<\eps$. By the regularity of $w$, $Dw$
exists a.e. and hence equation \eqref{w-eqn} holds a.e. It
follows that $(Dw\cdot \gamma)*\rho_\eps=2$ in
$D_{\frac{\delta}2}$. By the definition of the convolution and of
$\rho_\eps$, the Lipschitz continuity of $\gamma$, and the local boundedness of $Dw$, we are lead to
\begin{align*}
Dw_\eps\cdot \gamma(x) &= (Dw\cdot \gamma)*\rho_\eps (x)
%+(Dw\cdot(\gamma(x)-\gamma(\cdot)))*\rho_\eps\\
+\int%_{|x-y|<\eps}
Dw(y)\cdot(\gamma(y)-\gamma(x)) \rho_\eps(x-y)\ dy\\
&\geq 2 - \|Dw\|_{L^\infty(B(x,\eps))} L_\gamma
\eps\qquad\text{in}\qquad D_{\frac{\delta}2}.
\end{align*}
Hence for any bounded subset $K\subset D_{\frac\delta2}$ we can take
$\eps=\eps_K$ so small that
$$\gamma\cdot Dw_{\eps_K}\geq 1 \qquad\text{in }\qquad \ol
K. $$

Finally, the bound on $|\theta|$ follows directly from a similar bound
for $w$ and a suitable (local) choice of $\eps$. The bound for $w$ is a direct consequence of Assumption (BC3) and Lemma~\ref{backtoboundary}.
%And one can impose that   $||\theta||_{\infty , \bar \Omega} \le C \delta.$}
\end{proof}

\end{document}